\documentclass[a4paper,twoside,12pt]{article}
\usepackage{xeCJK}
\usepackage{titlesec}
\usepackage{titletoc}
\usepackage{amsmath, amsfonts}
\usepackage{pdfpages}
\usepackage{enumerate}
\usepackage{geometry}
\usepackage{color}
\usepackage[all,pdf]{xy}
\usepackage{verbatim}
\usepackage{amssymb}
\usepackage{amsthm}
\usepackage{enumitem}
\usepackage{hyperref}
\usepackage[T1]{fontenc}
\usepackage{fancyhdr} 
\usepackage[none]{hyphenat}

\newcommand{\xiaoerhao}{\fontsize{18pt}{\baselineskip}\selectfont}

\newcommand{\xiaosanhao}{\fontsize{15pt}{\baselineskip}\selectfont}
\newcommand{\sihao}{\fontsize{14pt}{\baselineskip}\selectfont}
\newcommand{\xiaosihao}{\fontsize{12pt}{\baselineskip}\selectfont}

\titleformat{\section}{\centering\xiaosanhao\bfseries}{\thesection}{1em}{}
\titleformat{\subsection}{\flushleft\sihao\bfseries}{\thesubsection}{1em}{}
\titleformat{\subsubsection}{\flushleft\xiaosihao\bfseries}{\thesubsubsection}{1em}{}

\newtheorem{theorem}{Theorem}[section]

\newtheorem{lemma}[theorem]{Lemma}

\newtheorem{definition}[theorem]{Definition}

\newtheorem{example}[theorem]{Example}
\newtheorem*{myremark}{Remark}

\numberwithin{equation}{section}

\title{Gauss Genus Theory in Characteristic 2}
\author{Qiyu Zhang}
\date{}

\begin{document}

\newpage
%% ================================================

\pagenumbering{arabic}
\renewcommand{\thepage}{\arabic{page}}

\maketitle

\begin{abstract}
	We extend Gauss composition and Gauss genus theory over $\mathbf{Z}$ to $\mathbb{F}_{2^n}[T]$, a polynomial ring over a finite field $\mathbb{F}_{2^n}$ of characteristic 2. We find new invariants of binary quadratic forms over $\mathbb{F}_{2^n}[T]$ by using Arf invariant and introduce new definitions of proper equivalence and direct composition, and prove that the direct composition makes the set of proper equivalence classes of binary quadratic forms with the same invariants into a finite Abelian group, which is isomorphic to a Picard group of a corresponding extension ring of $\mathbb{F}_{2^n}[T]$.
	Building on this, we develop genus theory in characteristic 2 and prove that the kernel of the generalized Gauss's map is the subgroup of all squares in the class group.
	
\end{abstract}

%% ================================================

\section{Introduction} 
\subsection{Background}
\qquad
Gauss composition and Gauss genus theory are fundamental results in the theory of binary quadratic forms over $\mathbf{Z}$. We say that two quadratic form $f_i(x,y)=a_ix^2+b_ixy+c_iy^2,i=1,2$ are proper equivalence, if there exists $\gamma \in Sl_2(\mathbf{Z})$ such that $f_1(x,y)=f_2((x,y)\gamma)$. Suppose $D \in \mathbf{Z}$ is not a perfect square and $D \equiv 0,1 \mod{4}$. Let \[Cl^+(D):=\{[f]\; | \; f \text{ is a primitive binary quadratic form of discriminant } D\},\]where $[f]$ is the proper equivalence class of $f$.

Legendre $^{\cite[ pp.361-379]{Legendre}}$ first considered the composition of quadratic forms over $\mathbf{Z}$. More precisely, let $f(x,y)$ and $g(z,w)$ be quadratic forms of discriminant $D$, then a form $F(x,y)$ of the same discriminant is their composition provided that 
\begin{equation}\label{0.1}\begin{gathered}
f(x,y)g(z,w)=F(B_1(x,y;z,w),B_2(x,y;z,w)),\\
\text{where } B_i(x,y;z,w)=p_ixz+q_ixw+r_iyz+s_iyw,\quad i=1,2.\end{gathered}\end{equation}
Legendre showed that any two forms of the same discriminant can be composed, but two forms can be composed in many different ways, and the resulting forms need not be properly equivalent. 

Given the above composition data, Gauss $^{\cite[ pp.230-259]{Gauss}}$ proved that
\begin{equation}\label{0.2}p_1q_2-p_2q_1=\pm f(1,0), \; p_1r_2-p_2r_1=\pm g(1,0),\end{equation}
and then he defined the composition to be a direct composition provided that both of the signs in the equations above are $+$. Gauss proved that for a fixed discriminant, direct composition makes $Cl^+(D)$ into a finite Abelian group. Moreover, Dirichlet $^{\cite[pp.611-627]{Dirichlet}}$ proved that $Cl^+(D)$ is isomorphic to $ \mathbb{CL}^+(\mathbf{Z} \oplus \mathbf{Z} \frac{D+\sqrt{D}}{2})$, the narrow ideal class group of $\mathbf{Z} \oplus \mathbf{Z} \frac{D+\sqrt{D}}{2}$.

Based on Gauss composition, Gauss $^{\cite[pp.270-277]{Gauss}}$ established the genus theory. Gauss defined a map \begin{equation}\label{0.3}
	\omega : Cl^+(D) \to (\mathbf{Z}/D\mathbf{Z})^*/H,
\end{equation} which maps $[f]$ to the elements in $(\mathbf{Z}/D\mathbf{Z})^*$ that can be represented by $[f]$. The kernel of $\omega$ is $Cl^+(D)^2$.

\subsection{Main Results}
\qquad Little research has been conducted on Gauss composition and Gauss genus theory of binary quadratic forms on polynomial ring of a field of characteristic 2. The first difficulty is that the invariants of quadratic forms in characteristic 2 differ from those in other characteristics since quadratic forms in characteristic 2 cannot correspond to a symmetric matrix. To solve this problem, Arf $^{\cite[ p.153]{Arf}}$ provided a new invariant. Suppose $k$ is a field of characteristic 2, and
$f=ax^2+bxy+cy^2$ is a quadratic form over $k$, Arf found that \[\Delta(f) \equiv \frac{ac}{b^2} \mod{\varphi(k)}, \text{ where } \varphi(x):=x^2+x\]
is an invariant under linear transformations. As we will detail in section 2.1, Arf's theory carries over to polynomial rings after minor modifications. 

The main challenge of extend Gauss composition to characteristic 2 is that the definition of proper equivalence and direct composition of quadratic forms over $\mathbf{Z}$ cannot be directly applied to characteristic 2, and one example in section 2.2 shows that two forms can be composed in many different ways (similarly to the case over $\mathbf{Z}$), so equation $(\ref{0.1})$ can not induce an operation on the set of equivalence class. Thus, it is necessary to find the analogue of direct composition. Also, we must consider whether it is necessary to define proper equivalence of forms and narrow equivalence of fractional ideal.

The remainder of this paper is organized as follows. In section 2, Gauss composition is extended to $\mathbb{F}_{2^n}[T]$, a polynomial ring of a finite field of characteristic 2. In section 2.1, the invariant and proper equivalence of quadratic forms over $\mathbb{F}_{2^n}[T]$ are discussed, then $C(b,\Delta)$ is defined as the set of proper equivalence class of forms with invariants $b$ and $\Delta$. In section 2.2, direct composition of forms over $\mathbb{F}_{2^n}[T]$ is defined, which induces an operation on $C(b,\Delta)$. In section 2.3, $C(b,\Delta)$ is shown to be a finite Abelian group isomorphic to a Picard group of corresponding extension ring of $\mathbb{F}_{2^n}[T]$.
In section 3, Gauss genus theory is extended to quadratic forms over $\mathbb{F}_{2^n}[T]$. In section 3.1, a map $\omega: C(b,\Delta) \to (R/(b^2))^*/H$ is defined as a generalization of map $(\ref{0.3})$. In section 3.2, $\ker(\omega)$ is shown to be $C(b,\Delta)^2$.

Most of the proofs in this paper are inspired by the theory of quadratic forms over $\mathbf{Z}$, and some definition and many details have been adjusted to accommodate the difference between $\mathbf{Z}$ and $\mathbb{F}_{2^n}[T]$.

\section{Composition of Binary Quadratic Forms}
\subsection{Proper Equivalence of Binary Quadratic Forms}
\qquad
Let $\mathbb{F}_{2^n}$ be a field of $2^n$ elements. Let $R=\mathbb{F}_{2^n}[T]$ be a polynomial ring. Let $K=Frac(R)$ be the fractional field of $R$.

Denote binary quadratic form $ax^2+bxy+cy^2$ by $(a,b,c)$, $a,b,c \in R $. 

Let $Sl_2(R)=\{\begin{pmatrix}p & q \\r & s\end{pmatrix}|ps+qr=1,p,q,r,s\in R\}$. 
Suppose \[f(x,y)=ax^2+bxy+cy^2,\; \gamma = \begin{pmatrix}p & q \\r & s\end{pmatrix} \in Sl_2(R).\]
Then $F(x,y) := f((x,y)\gamma)=f(px+ry,qx+sy)=Ax^2+Bxy+Cy^2$, where
\begin{equation}\label{1}
	\begin{cases}
		A=ap^2+bpq+cq^2,\\
		B=b(ps+qr)=b,\\
		C=ar^2+brs+cs^2,
	\end{cases}
\end{equation}
denoted by $\gamma(a,b,c)=(A,B,C)$ or $\gamma f = F$, and call (a,b,c) and (A,B,C) are equivalent. By definition, $\gamma_1(\gamma_2(a,b,c))=(\gamma_1\gamma_2)(a,b,c)$. 

If $\gamma \in Sl_2(R)$, and $(a,b,c)$ is a primitive form( i.e. $a,b,c$ are coprime), then the common divisors of $A,B,C$ also divide $a,b,c$ since $(a,b,c)=\gamma^{-1}(A,B,C)$, then   $(A,B,C)=\gamma (a,b,c)$ \linebreak is a primitive form.

Let $\varphi (t)=t^2+bt$. From Arf invariant we can find \begin{equation}\label{Arf}
	AC=ac+\varphi(apr+bqr+cqs).
\end{equation} So if $(a_1,b_1,c_1)$ and $(a_2,b_2,c_2)$ are equivalent, we have \[b_1=b_2, \: a_1c_1 \equiv a_2c_2 \mod \varphi(R).\]

\begin{lemma}\label{1.1}
	$\forall b \in R$, there exists a subgroup$R_b$ of $(R,+)$, such that $\forall k \in R$, exactly one of $k$ and $k+b$ belongs to $R_b$.
\end{lemma}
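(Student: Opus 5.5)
Since $R$ has characteristic $2$, I plan to treat $(R,+)$ as a vector space over $\mathbb{F}_2$ and take $R_b$ to be the kernel of a suitable $\mathbb{F}_2$-linear functional $\lambda: R\to\mathbb{F}_2$ with $\lambda(b)=1$. Given such a $\lambda$, $R_b=\ker\lambda$ is a subgroup of $(R,+)$. For any $k\in R$ we have $\lambda(k+b)=\lambda(k)+1$, so exactly one of $\lambda(k)$ and $\lambda(k+b)$ is $0$. Hence exactly one of $k$ and $k+b$ lies in $R_b$.

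The degenerate case $b=0$ has to be handled separately. Here $k$ and $k+b$ coincide, so the condition can only be read as saying that this single element lies in $R_b$. I would take $R_b=R$, and remark that this is the natural convention; presumably only $b\neq 0$ matters later in the paper, where $b$ is the middle coefficient of a form with $\Delta=ac/b^2$.

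For $b\neq 0$ I will write the functional down explicitly rather than appeal to the existence of a complement, which would use a basis of an infinite-dimensional space. Write $b=\beta T^d+(\text{lower terms})$ with $\beta\in\mathbb{F}_{2^n}^*$, and for $k\in R$ let $k_d$ be its coefficient of $T^d$. The trace $\mathrm{tr}:\mathbb{F}_{2^n}\to\mathbb{F}_2$ is $\mathbb{F}_2$-linear and surjective, since it is a nonzero polynomial map of degree $2^{n-1}<2^n$. So I can choose $c\in\mathbb{F}_{2^n}$ with $\mathrm{tr}(c)=1$ and set
\[\lambda(k)=\mathrm{tr}\bigl(c\,\beta^{-1}k_d\bigr).\]
This map is additive, because both $k\mapsto k_d$ and $\mathrm{tr}$ are additive, and it satisfies $\lambda(b)=\mathrm{tr}(c)=1$.

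The only step that needs care is the choice of normalization. The naive choice $\lambda(k)=\mathrm{tr}(\beta^{-1}k_d)$ gives $\lambda(b)=\mathrm{tr}(1)=n\bmod 2$, which vanishes when $n$ is even. That is exactly why I introduce the auxiliary element $c$ with trace $1$.
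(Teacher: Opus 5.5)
Your argument is correct, but it builds a different subgroup from the paper's.

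\textbf{The paper's construction.} The paper uses division by $b$. Writing $k=qb+r$ with $\deg r<\deg b$, it sets $R_b=\{qb+r : q(0)\in G\}$, where $G\subset(\mathbb{F}_{2^n},+)$ is an index-$2$ subgroup with $1\notin G$. The property then follows from $k+b=(q+1)b+r$. So its functional is ``constant term of the quotient, modulo $G$'', whereas yours is $\mathrm{tr}(c\beta^{-1}k_d)$. The two coincide when $b=\beta T^d$ is a monomial and $G=\ker(x\mapsto\mathrm{tr}(cx))$, as in Example~\ref{eg1}, but not in general.

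\textbf{What your version buys.} Your verification is complete, you make $G$ explicit (the paper only asserts that it exists), and you treat $b=0$, where the paper's condition $\deg r<\deg b$ degenerates.

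\textbf{What the paper's version buys.} The paper's $R_b$ is the one ``fixed from now on'', and the later proofs use its specific shape, not just the property stated in the lemma. Lemma~\ref{1.9} is designed to produce multiples $uv$ whose quotient has constant term $1$, so that they lie outside $R_b$; this is how Lemmas~\ref{1.10} and~\ref{1.12} adjust $r,s$ or $e_1,e_2$. Case (1) of Lemma~\ref{1.12} also uses $TbR\subseteq R_b$.

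Your $R_b$ lacks both features. For $n=1$ and $b=T+1$ you get $\lambda(k)=k_1$. Then $Tb=T^2+T\notin R_b$, while $T^2R\subseteq R_b$ even though $(T^2,b)=1$.

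The case $u=T^2$ actually occurs in the proof of Lemma~\ref{1.10}. Take the form $(T^2,T+1,1)$ with $m=1$. The construction gives $p=1$, $q=T+1$, so $ap^2+bpq+cq^2=T^2$. With $r=0$, $s=1$ one gets $apr+bqr+cqs=T+1\notin R_b$, and adding multiples $vT^2$ can never move it into $R_b$.

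So your proof of the lemma as stated is fine. It just is not a drop-in replacement for the paper's fixed $R_b$ unless those later arguments are reworked.
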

\begin{proof} Choose a subgroup G of $(\mathbb{F}_{2^n},+)$ such that $\# G=2^{n-1}$ and $ 1 \notin G$. Let \[R_b=\{ qb+r | q,r \in R, deg(r)< deg(b), q(0) \in G\}.\qedhere\]
\end{proof}

Fix $G$ and $R_b$ from now on.

\begin{definition}\label{1.2}
	Call primitive quadratic form $(a,b,c)$ is proper equivalent to $(A,B,C)$ ( denote by $(a,b,c) \approx (A,B,C)$), if there exists $\gamma=\begin{pmatrix}p & q \\r & s\end{pmatrix} \in Sl_2(R)$, such that \[\gamma(a,b,c)=(A,B,C), \text{ and } apr+bqr+cqs \in R_b.\] Denote $\{\gamma=\begin{pmatrix}p & q \\r & s\end{pmatrix} \in Sl_2(R)|apr+bqr+cqs \in R_b\}$ by $S_{(a,b,c)}$.
\end{definition} 

\begin{myremark}
	This definition is inspired by equation $(\ref{Arf})$, lemma $\ref{1.19}$ and equation $(\ref{13})$. 
\end{myremark}

Claim that proper equivalence is an equivalence relation. Let $\gamma = I_2 =\begin{pmatrix}1 & 0 \\0 & 1\end{pmatrix}$, then $\gamma(a,b,c)=(a,b,c)$, and $0 \in R_b$, so $(a,b,c) \approx (a,b,c)$. If \[ \begin{pmatrix}p & q \\r & s\end{pmatrix}(a,b,c)=(A,B,C),\: apr+bqr+cqs \in R_b,\] then \[\begin{pmatrix}s & q \\r & p\end{pmatrix}(A,B,C)=(a,b,c),\] and 
\begin{equation}\notag
	\begin{split}
		Asr+Bqr+Cqp &=(ap^2+bpq+cq^2)sr+bqr+(ar^2+brs+cs^2)qp \\&=apr(ps+qr)+bqr+cqs(qr+ps)\\&=apr+bqr+cqs \in R_b.
	\end{split}
\end{equation}
So $(a,b,c) \approx (A,B,C)$ implies $(A.B,C) \approx (a,b,c)$. Finally, if \[\begin{pmatrix}p_1 & q_1 \\r_1 & s_1\end{pmatrix}(a_1,b,c_1)=(a_2,b,c_2),\begin{pmatrix}p_2 & q_2 \\r_2 & s_2\end{pmatrix}(a_2,b,c_2)=(a_3,b,c_3),\]\[a_ip_ir_i+bq_ir_i+c_iq_is_i \in R_b, i=1,2,\] then\[ (\begin{pmatrix}p_2 & q_2 \\r_2 & s_2\end{pmatrix}\begin{pmatrix}p_1 & q_1 \\r_1 & s_1\end{pmatrix})(a_1,b,c_1)=(a_3,b,c_3),\]and 
\begin{equation}\label{2}
	\begin{split}
		&a_1(p_2p_1+q_2r_1)(r_2p_1+s_2r_1)+b(r_2p_1+s_2r_1)(p_2q_1+q_2s_1)+c_1(p_2q_1+q_2s_1)(r_2q_1+s_2s_1)\\=&(a_1p_1^2+bp_1q_1+c_1q_1^2)p_2r_2+bq_2r_2(p_1s_1+q_1r_1)+(a_1r_1^2+br_1s_1+c_1s_1^2)q_2s_2\\&+(a_1p_1r_1+bq_1r_1+c_1q_1s_1)(p_2s_2+q_2r_2)\\=&(a_2p_2r_2+bq_2r_2+c_2q_2s_2)+(a_1p_1r_1+bq_1r_1+c_1q_1s_1) \in R_b.
	\end{split}
\end{equation}
So $(a_1,b,c_1) \approx (a_2,b,c_2),(a_2,b,c_2) \approx (a_3,b,c_3)$ implies $(a_1,b,c_1) \approx (a_3,b,c_3)$. Then proper equivalence is an equivalence relation.

Moreover, suppose $\begin{pmatrix}p_1 & q_1 \\r_1 & s_1\end{pmatrix}(a_1,b,c_1)=(a_2,b,c_2)$, $\begin{pmatrix}p_2 & q_2 \\r_2 & s_2\end{pmatrix}(a_2,b,c_2)=(a_3,b,c_3)$, and $a_ip_ir_i+bq_ir_i+c_iq_is_i \notin R_b, i=1,2$. By equation $(\ref{2})$ and  \[(a_2p_2r_2+bq_2r_2+c_2q_2s_2)+(a_1p_1r_1+bq_1r_1+c_1q_1s_1) \in R_b,\] we have $\begin{pmatrix}p_2 & q_2 \\r_2 & s_2\end{pmatrix}\begin{pmatrix}p_1 & q_1 \\r_1 & s_1\end{pmatrix} \in S_{(a_1,b,c_1)}$. So the next lamma follows:

\begin{lemma}\label{1.3}
	Suppose $(a,b,c)$ is a primitive quadratic form and $\gamma_1,\gamma_2 \in Sl_2(R)$. Then
	\begin{enumerate}[label=(\arabic*)]
		\item If $\gamma_1 \in S_{(a,b,c)}$, then $\gamma_1^{-1}\in S_{\gamma_1(a,b,c)}$.
		\item $\gamma_2\gamma_1 \in S_{(a,b,c)}$ if and only if $\gamma_1 \in S_{(a,b,c)}$ and $\gamma_2 \in S_{\gamma_1(a,b,c)}$, or $\gamma_1 \notin S_{(a,b,c)}$ and \\ $\gamma_2 \notin S_{\gamma_1(a,b,c)}$.
	\end{enumerate} 
\end{lemma}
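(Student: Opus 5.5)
The natural tool is a ``cocycle'' function. For a form $f=(a,b,c)$ and $\gamma=\begin{pmatrix}p & q \\r & s\end{pmatrix}\in Sl_2(R)$, put
\[\delta(f,\gamma):=apr+bqr+cqs.\]
By definition, $\gamma\in S_f$ exactly when $\delta(f,\gamma)\in R_b$. The computation already carried out for symmetry and transitivity gives two identities. The symmetry computation says $\delta(\gamma f,\gamma^{-1})=\delta(f,\gamma)$; here $\gamma^{-1}=\begin{pmatrix}s & q \\r & p\end{pmatrix}$, since we are in characteristic $2$. Equation $(\ref{2})$ says
\[\delta(f,\gamma_2\gamma_1)=\delta(\gamma_1 f,\gamma_2)+\delta(f,\gamma_1).\]
So the plan is to reduce both parts of the lemma to these identities together with the group structure of $R_b$ from Lemma \ref{1.1}.

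For (1), suppose $\gamma_1\in S_{(a,b,c)}$, so $\delta((a,b,c),\gamma_1)\in R_b$. The symmetry identity gives $\delta(\gamma_1(a,b,c),\gamma_1^{-1})=\delta((a,b,c),\gamma_1)\in R_b$, which is exactly $\gamma_1^{-1}\in S_{\gamma_1(a,b,c)}$. I will remark that the form $\gamma_1(a,b,c)$ is still primitive with the same $b$, as shown earlier in the paper, so $S_{\gamma_1(a,b,c)}$ is defined with respect to the same subgroup $R_b$.

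For (2), write $x=\delta((a,b,c),\gamma_1)$ and $y=\delta(\gamma_1(a,b,c),\gamma_2)$. By $(\ref{2})$, $\gamma_2\gamma_1\in S_{(a,b,c)}$ if and only if $x+y\in R_b$. The lemma then becomes a statement about $R_b$ alone: $x+y\in R_b$ if and only if $x,y$ are both in $R_b$ or both outside it.

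That statement follows from Lemma \ref{1.1}. Since $R_b$ is a subgroup and $b\notin R_b$ (apply Lemma \ref{1.1} with $k=0$), $R_b$ has index exactly $2$ in $(R,+)$. Indeed, for every $k$ exactly one of $k$ and $k+b$ lies in $R_b$, so the two cosets are $R_b$ and $b+R_b$. In $R/R_b\cong\mathbf{Z}/2$ the class of $x+y$ is the sum of the classes of $x$ and $y$, and this gives the claim.

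The only step needing care is verifying that the expansion in $(\ref{2})$ really is the identity $\delta(f,\gamma_2\gamma_1)=\delta(\gamma_1 f,\gamma_2)+\delta(f,\gamma_1)$ for arbitrary $\gamma_1,\gamma_2$, with no membership hypotheses. This holds because that computation is purely algebraic and uses only $p_is_i+q_ir_i=1$. Apart from that check, the argument is bookkeeping.
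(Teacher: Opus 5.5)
Your proposal is correct and follows essentially the paper's own route. Part (1) is exactly the symmetry computation $Asr+Bqr+Cqp=apr+bqr+cqs$, and part (2) is the identity $(\ref{2})$ combined with the fact that $R_b$ has index $2$ in $(R,+)$ (cosets $R_b$ and $b+R_b$), which the paper uses in the paragraph just before the lemma; your argument in $R/R_b\cong\mathbf{Z}/2$ also makes explicit the ``only if'' direction that the paper leaves implicit.
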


Since proper equivalence is an equivalence relation, we can denote the proper equivalence class of $(a,b,c)$ by $[(a,b,c)]$.

\begin{definition}\label{1.4}
	Suppose $b \in R$, $b \ne 0$, $\Delta \in R$, $\Delta \not \equiv 0 \mod{\varphi(R)}$. Define \[C(b,\Delta):= \{[(a,b,c)]|(a,b,c) \text{ is a primitive quadratic form, }a,c \in R,\; ac \equiv \Delta \mod{\varphi(R)} \}.\]
\end{definition} 

\subsection{Composition of Binary Quadratic Forms}
\qquad

Fix $b \in R$, $b \ne 0$, $\Delta \in R$, $\Delta \not \equiv 0 \mod{\varphi(R)}$.

For $[(a,b,c)] \in C(b,\Delta)$, there exists $k \in R $ such that $ac=\Delta + \varphi(k)=\Delta + \varphi(k+b)$. Exactly one of $k$ and $k+b$ belongs to $R_b$. Take $k \in R_b$ for convention.

Legendre gives the definition of composition of quadratic forms: 
\begin{definition}\label{1.5}
	The quadratic form $(a_3,b,c_3)$ is called the composition of $(a_1,b,c_1)$ and $(a_2,b,c_2)$, if there exists $p_i,q_i,r_i,s_i \in R$, such that \begin{equation}\label{3}
		(a_1x^2+bxy+c_1y^2)(a_2z^2+bzw+c_2w^2)=a_3L_1^2+bL_1L_2+c_3L_2^2, 
	\end{equation}
	where $L_i=p_ixz+q_ixw+r_iyz+s_iyw,i=1,2$.
\end{definition}

Bhargava$^{\cite[ pp.219-220]{Bhargava}}$ use $2\times2\times2$ cubes to give a description of the composition of quadratic forms over $\mathbf{Z}$. Similarly, $2\times2\times2$ cubes over $R$ can give a description of the composition of quadratic forms over $R$ can be described. Suppose $a,b,c,d,e,f,g,h \in R$. Let $\mathcal{A}=$ be the $2\times2\times2$ cube
\[\xymatrix{&
	a\ar@{-}[rr]\ar@{-}[ld]\ar@{-}[dd]& &
	b\ar@{-}[ld]\ar@{-}[dd]\\
	c\ar@{-}[rr]\ar@{-}[dd]& &
	d\ar@{-}[dd]&\\&
	e\ar@{-}[rr]\ar@{-}[ld]& &
	f\ar@{-}[ld]\\
	g\ar@{-}[rr]& &
	h&}\]
$\mathcal{A}$ can be partitioned into the $2\times2$ matrices\[M_1=\begin{pmatrix}a&b\\c&d\end{pmatrix},N_1=\begin{pmatrix}e&f\\g&h\end{pmatrix},\]or into \[M_2=\begin{pmatrix}a&c\\e&g\end{pmatrix},N_2=\begin{pmatrix}b&d\\f&h\end{pmatrix},\]or\[M_3=\begin{pmatrix}c&d\\g&h\end{pmatrix},N_3=\begin{pmatrix}a&b\\e&f\end{pmatrix}.\]
Then the cube gives three quadratic forms, \[f_i^{\mathcal{A}}(x,y)=det(M_ix+N_iy)=a_ix^2+b_ixy+c_iy^2,i=1,2,3.\] More precisely, 
\begin{equation}\label{4}\begin{split}
	&f_1^{\mathcal{A}}(x,y)=(ad+bc)x^2+(ah+bg+de+cf)xy+(eh+fg)y^2,\\
	&f_2^{\mathcal{A}}(x,y)=(ag+ce)x^2+(ah+bg+de+cf)xy+(bh+df)y^2,\\
	&f_3^{\mathcal{A}}(x,y)=(ch+dg)x^2+(ah+bg+de+cf)xy+(af+be)y^2.
\end{split}\end{equation}
Suppose $\gamma \in Sl_2(r)$. Let $^i\gamma(i=1,2,3)$ be the act on the cube $\mathcal{A}$ that replace $(M_i,N_i)$ by $(pM_i+qN_i,rM_i+sN_i)$. Then \begin{equation}\label{5}
	f_j^{^i\gamma \mathcal{A}}=\begin{cases}\gamma f_j^{\mathcal{A}},&i=j;\\f_j^{\mathcal{A}},&i \ne j.
	\end{cases}
\end{equation}

By the above definition the composition can be represented by a $2\times2\times2$ cube:

\begin{lemma}\label{1.6}
    Suppose $a_i,b,c_i(i=1,2,3),p_j,q_j,r_j,s_j(j=1,2) \in R, a_1 \ne 0, b \ne 0$. Let $\mathcal{A}$ be the cube \[\xymatrix{&
		p_1\ar@{-}[rr]\ar@{-}[ld]\ar@{-}[dd]& &
		q_1\ar@{-}[ld]\ar@{-}[dd]\\
		p_2\ar@{-}[rr]\ar@{-}[dd]& &
		q_2\ar@{-}[dd]&\\&
		r_1\ar@{-}[rr]\ar@{-}[ld]& &
		s_1\ar@{-}[ld]\\
		r_2\ar@{-}[rr]& &
		s_2&}\]
	Then $a_i,b,c_i(i=1,2,3),p_j,q_j,r_j,s_j(j=1,2)$ satisfy the equation $(\ref{3})$ if and only if
	\[f_i^{\mathcal{A}}=(a_i,b,c_i),i=1,2,3.\]
\end{lemma}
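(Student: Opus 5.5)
The plan is to prove both directions by comparing coefficients. The key simplification is that in characteristic $2$ a square $L^2$ contains only the squares of the monomials of $L$. With the cube labelled as in the statement, the three forms of $(\ref{4})$ become
\[f_1^{\mathcal{A}}=(p_1q_2+q_1p_2,\;B,\;r_1s_2+s_1r_2),\quad f_2^{\mathcal{A}}=(p_1r_2+p_2r_1,\;B,\;q_1s_2+s_1q_2),\quad f_3^{\mathcal{A}}=(p_2s_2+q_2r_2,\;B,\;p_1s_1+q_1r_1),\]
where $B=p_1s_2+q_1r_2+q_2r_1+p_2s_1$.

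\emph{The ``if'' direction.} I would first establish the polynomial identity
\[f_1^{\mathcal{A}}(x,y)\,f_2^{\mathcal{A}}(z,w)=f_3^{\mathcal{A}}(L_1,L_2)\]
in $R[x,y,z,w]$, with $L_1,L_2$ as in Definition \ref{1.5}. This is the characteristic-$2$ version of Bhargava's cube identity, and I would verify it by expanding both sides monomial by monomial. On the right, the terms $a_3'L_i^2$ contribute only the squares $p_i^2x^2z^2,\dots$, which keeps the bookkeeping manageable. This routine but lengthy verification is the main labour of the proof. Once the identity holds, the assumption $f_i^{\mathcal{A}}=(a_i,b,c_i)$ turns it directly into equation $(\ref{3})$.

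\emph{The ``only if'' direction.} Assume $(\ref{3})$ holds.
\begin{enumerate}[label=(\roman*)]
\item Set $y=0$ and divide by $x^2$. This gives $a_1(a_2z^2+bzw+c_2w^2)=F(p_1z+q_1w,\,p_2z+q_2w)$ with $F=(a_3,b,c_3)$. The middle coefficient of the right side is $b(p_1q_2+q_1p_2)$, as in $(\ref{1})$ (that computation never used $ps+qr=1$). Comparing $zw$-coefficients gives $ba_1=b(p_1q_2+q_1p_2)$, and since $b\neq 0$ we get $a_1=p_1q_2+q_1p_2$.
\item In the same way, $x=0$ gives $c_1=r_1s_2+s_1r_2$, $w=0$ gives $a_2=p_1r_2+p_2r_1$, and $z=0$ gives $c_2=q_1s_2+q_2s_1$.
\item For the middle coefficient, compare the $xyzw$-coefficient of $(\ref{3})$. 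The left side gives $b^2$. On the right, $a_3L_1^2$ and $c_3L_2^2$ contribute nothing, and $bL_1L_2$ contributes $bB$. Hence $b^2=bB$, so $B=b$. At this point $f_1^{\mathcal{A}}=(a_1,b,c_1)$ and $f_2^{\mathcal{A}}=(a_2,b,c_2)$.
\item To identify $f_3^{\mathcal{A}}=(a_3',b,c_3')$, combine the identity from the first direction with $(\ref{3})$ and subtract. This gives $(a_3+a_3')L_1^2+(c_3+c_3')L_2^2=0$. If $L_1$ and $L_2$ are not proportional over $K$, then $L_1^2$ and $L_2^2$ are linearly independent, because squaring is injective additive on $K[x,y,z,w]$ in characteristic $2$. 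That forces $a_3=a_3'$ and $c_3=c_3'$.
\item It remains to exclude proportional $L_1,L_2$. If $L_2=\lambda L_1$ (or $L_1=0$), then the right side of $(\ref{3})$ would be a multiple of a single square and so would have zero $xyzw$-coefficient. The left side has $xyzw$-coefficient $b^2\neq0$, a contradiction.
\end{enumerate}

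The hypothesis $a_1\neq0$ does not seem to be needed in this argument; $b\neq0$ suffices. I expect the only real obstacle to be the expansion verifying the cube identity in the first direction.
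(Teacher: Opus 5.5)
Your proof is correct and largely parallels the paper's. Steps (i)--(iii) compare the cross-term coefficients and, after cancelling $b$, give $f_1^{\mathcal{A}}=(a_1,b,c_1)$ and $f_2^{\mathcal{A}}=(a_2,b,c_2)$. Your cube identity $f_1^{\mathcal{A}}(x,y)f_2^{\mathcal{A}}(z,w)=f_3^{\mathcal{A}}(L_1,L_2)$ is exactly the paper's check that $(a_3,c_3)=(p_2s_2+q_2r_2,\,p_1s_1+q_1r_1)$ satisfies the four square-term equations; the cross terms match automatically.

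The real difference is how you get uniqueness of $(a_3,c_3)$. The paper uses only the $x^2z^2$ and $x^2w^2$ equations. These form a linear system with determinant $(p_1q_2+p_2q_1)^2=a_1^2$, and that is the only place $a_1\neq0$ is used. You use all four square-term equations and get non-degeneracy from $b\neq0$. This gives a slightly stronger lemma, since the hypothesis $a_1\neq0$ can be dropped. The paper never needs that, because $ac\notin\varphi(R)$ already forces $a\neq0$ in every application.

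One step needs a better reason. In (iv), ``squaring is injective and additive'' only rules out $uL_1^2+vL_2^2=0$ when $u,v$ are squares. But $K=\mathbb{F}_{2^n}(T)$ is not perfect, and $a_3+a_3'$, $c_3+c_3'$ are arbitrary elements of $R$. The claim is still true, and the simplest repair also makes (v) unnecessary. Write $\delta=a_3+a_3'$ and $\epsilon=c_3+c_3'$.
\begin{itemize}
\item The $x^2z^2$ and $y^2w^2$ coefficients give the system $\delta p_1^2+\epsilon p_2^2=\delta s_1^2+\epsilon s_2^2=0$, with determinant $(p_1s_2+p_2s_1)^2$.
\item The $x^2w^2$ and $y^2z^2$ coefficients give a system with determinant $(q_1r_2+q_2r_1)^2$.
\end{itemize}
By (iii), $(p_1s_2+p_2s_1)+(q_1r_2+q_2r_1)=b\neq0$, so one of these determinants is nonzero, hence $\delta=\epsilon=0$. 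This is the paper's Cramer argument with $a_1^2$ replaced by a minor that $b\neq0$ forces to be nonzero.
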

\begin{proof} Compute the coefficients of $x^2z^2,x^2w^2,y^2z^2,y^2w^2,x^2zw,xyz^2,xyw^2,y^2zw,xyzw$ in equation $(\ref{3})$, then the equation $(\ref{3})$ is equivalent to 
\begin{equation}\label{6} \left\{\begin{aligned}
	&a_1a_2=a_3p_1^2+bp_1p_2+c_3p_2^2,\\
	&a_1c_2=a_3q_1^2+bq_1q_2+c_3q_2^2,\\
	&c_1a_2=a_3r_1^2+br_1r_2+c_3r_2^2,\\
	&c_1c_2=a_3s_1^2+bs_1s_2+c_3s_2^2,\\
	&a_1b=(p_1q_2+p_2q_1)b,\\
	&a_2b=(p_1r_2+p_2r_1)b,\\
	&c_2b=(q_1s_2+q_2s_1)b,\\
	&c_1b=(r_1s_2+r_2s_1)b,\\
	&b^2=b(p_1s_2+p_2s_1+q_1r_2+q_2r_1).
\end{aligned}\right.\end{equation}
Since $b \ne 0$, the last five equations are equivalent to \begin{gather}
	a_1=(p_1q_2+p_2q_1),\label{7}\\ 
	a_2=(p_1r_2+p_2r_1),\label{8}\\
	c_2=(q_1s_2+q_2s_1),\label{9}\\
	c_1=(r_1s_2+r_2s_1),\label{10}\\
	b=p_1s_2+p_2s_1+q_1r_2+q_2r_1,\label{11}
\end{gather}
which are equivalent to $f_i^{\mathcal{A}}=(a_i,b,c_i),i=1,2$.

Now suppose equation $(\ref{7})-(\ref{11})$ holds. From the first two equations of $(\ref{6})$we have\begin{equation}
	\left\{\begin{aligned}
		&p_1^2a_3+p_2^2c_3=a_1a_2+bp_1p_2,\\
		&q_1^2a_3+q_2^2c_3=a_1c_2+bq_1q_2.
	\end{aligned}\right.
\end{equation}
By \[p_1^2q_2^2-p_2^2q_1^2=(p_1q_2+p_2q_1)^2=a_1^2 \ne 0\] and Cramer's rule, for given $a_i,b,c_i(i=1,2),p_j,q_j,r_j,s_j(j=1,2)$, there is at most one pair $(a_3,c_3)$ satisfy $(\ref{6})$. On the other hand, \[\begin{aligned}
	&(p_2s_2+q_2r_2)p_1^2+bp_1p_2+(p_1s_1+q_1r_1)p_2^2\\
	=&(p_2s_2+q_2r_2)p_1^2+(p_1s_2+p_2s_1+q_1r_2+q_2r_1)p_1p_2+(p_1s_1+q_1r_1)p_2^2\\
	=&(p_1q_2+p_2q_1)(p_1r_2+p_2r_1)\\=&a_1a_2,
\end{aligned}\]
and similarly we have
\[\begin{aligned}
	&(p_2s_2+q_2r_2)q_1^2+bq_1q_2+(p_1s_1+q_1r_1)q_2^2=a_1c_2,\\
	&(p_2s_2+q_2r_2)r_1^2+br_1r_2+(p_1s_1+q_1r_1)r_2^2=c_1a_2,\\
	&(p_2s_2+q_2r_2)s_1^2+bs_1s_2+(p_1s_1+q_1r_1)s_2^2=c_1c_2.
\end{aligned}\]
That is, if $a_3=p_2s_2+q_2r_2,c_3=p_1s_1+q_1r_1$, equation $(\ref{6})$ will hold. So the first four equation of $(\ref{6})$ is equivalent to $a_3=p_2s_2+q_2r_2,c_3=p_1s_1+q_1r_1$ and $f_3^{\mathcal{A}}=(a_3,b,c_3)$.\end{proof}

\begin{example}\label{eg0}
	Let 
	\[\mathcal{A}= \begin{gathered}\xymatrix{&
			1\ar@{-}[rr]\ar@{-}[ld]\ar@{-}[dd]& &
			0\ar@{-}[ld]\ar@{-}[dd]\\
			0\ar@{-}[rr]\ar@{-}[dd]& &
			T\ar@{-}[dd]&\\&
			0\ar@{-}[rr]\ar@{-}[ld]& &
			T\ar@{-}[ld]\\
			T\ar@{-}[rr]& &
			1&}\end{gathered},\quad 
	\mathcal{A'}= \begin{gathered}\xymatrix{&
			T\ar@{-}[rr]\ar@{-}[ld]\ar@{-}[dd]& &
			1\ar@{-}[ld]\ar@{-}[dd]\\
			0\ar@{-}[rr]\ar@{-}[dd]& &
			1\ar@{-}[dd]&\\&
			0\ar@{-}[rr]\ar@{-}[ld]& &
			T^2\ar@{-}[ld]\\
			1\ar@{-}[rr]& &
			0&}\end{gathered}.\]
	Then \[f_1^{\mathcal{A}}=f_1^{\mathcal{A'}}=(T,1,T^2),\; f_2^{\mathcal{A}}=f_2^{\mathcal{A'}}=(T,1,T^2),\; f_3^{\mathcal{A}}=(T^2,1,T),\; f_3^{\mathcal{A'}}=(1,1,T^3).\]
	For all $p,q \in R$ we have $\deg(p^2+pq+T^3q^2) \ne 1$ so $(1,1,T^3)$ is not equivalent to $(T,1,T^2)$. That is, equation $(\ref{3})$ can not induce an operation on the set of equivalence (or proper equivalence) class and we need to add extra restrictions.
\end{example}

The next lemma is the analogue of equation $(\ref{0.2})$ in Gauss composition over $\mathbf{Z}$.

\begin{lemma}\label{1.7}
	If $a_1c_1 = a_2c_2 + \varphi(k)$, and $p_i,q_i,r_i,s_i(i=1,2)$ satisfy equation $(\ref{7})-(\ref{11})$, then $p_1s_2+p_2s_1=k,q_1r_2+q_2r_1=k+b$ or $p_1s_2+p_2s_1=k+b,q_1r_2+q_2r_1=k$.
\end{lemma}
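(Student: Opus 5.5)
Set $u=p_1s_2+p_2s_1$ and $v=q_1r_2+q_2r_1$. By $(\ref{11})$ we already know $u+v=b$, so $v=u+b$. The plan is to show that $u$ satisfies the quadratic equation
\[\varphi(u)=\varphi(k),\]
and then read off the two cases by factoring.

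\textbf{Step 1: compute $a_1c_1+a_2c_2$ in terms of $u$ and $v$.} Using $(\ref{7})$ and $(\ref{10})$,
\[a_1c_1=(p_1q_2+p_2q_1)(r_1s_2+r_2s_1),\]
and using $(\ref{8})$ and $(\ref{9})$,
\[a_2c_2=(p_1r_2+p_2r_1)(q_1s_2+q_2s_1).\]
Expand both products. The four monomials $p_1q_2r_1s_2$, $p_2q_1r_2s_1$, $p_1q_1r_2s_2$, $p_2q_2r_1s_1$ appear in the sum; $uv=p_1q_1r_2s_2+p_1q_2r_1s_2+p_2q_1r_2s_1+p_2q_2r_1s_1$ contains exactly these four plus nothing else. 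The remaining terms in $a_1c_1+a_2c_2$ are $p_1q_2r_2s_1+p_2q_1r_1s_2+p_1q_1r_2s_2\cdots$; I will sort them carefully in characteristic $2$. The expected identity is
\[a_1c_1+a_2c_2=uv,\]
and verifying this cancellation is the only real computation, hence the main obstacle (pure bookkeeping).

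\textbf{Step 2: rewrite as a quadratic in $u$.} Since $v=u+b$, we get $uv=u^2+bu=\varphi(u)$. The hypothesis gives $a_1c_1+a_2c_2=\varphi(k)$, so $\varphi(u)=\varphi(k)$.

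\textbf{Step 3: factor.} In characteristic $2$,
\[\varphi(u)+\varphi(k)=(u+k)^2+b(u+k)=(u+k)(u+k+b).\]
Since $R$ is an integral domain, either $u=k$ or $u=k+b$. Combined with $v=u+b$, this yields exactly the two alternatives in the statement: $u=k,\ v=k+b$, or $u=k+b,\ v=k$.
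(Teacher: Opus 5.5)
Your proof is correct. The identity you single out as the main obstacle does hold. Each of $a_1c_1$ and $a_2c_2$ contains the monomials $p_1q_2r_2s_1$ and $p_2q_1r_1s_2$ once, so these cancel in characteristic $2$. What survives is exactly $p_1q_1r_2s_2+p_1q_2r_1s_2+p_2q_1r_2s_1+p_2q_2r_1s_1=uv$. Your list of ``remaining terms'' wrongly includes $p_1q_1r_2s_2$, but the conclusion $a_1c_1+a_2c_2=uv$ is right.

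Your route differs from the paper's written proof. The paper uses $b\neq 0$ to assume without loss of generality that $d=p_1s_2+p_2s_1\neq 0$. It then solves $(\ref{7}),(\ref{9})$ and $(\ref{8}),(\ref{10})$ by Cramer's rule, writing $q_1,q_2,r_1,r_2$ as fractions with denominator $d$. Substituting into $(\ref{11})$ gives $b=d+(a_1c_1+a_2c_2)/d$, i.e.\ $d^2+bd=\varphi(k)$.

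Your argument is the ``shorter proof'' the paper only mentions in the Remark after the lemma. It is a polynomial identity over any commutative ring of characteristic $2$. It needs no division, no nonvanishing of $d$ or $b$, and no symmetry step. It also gives the Remark's unconditional statement $\varphi(p_1s_2+p_2s_1)=a_1c_1+a_2c_2$ without any hypothesis on $k$. Integrality of $R$ is used only in the final factorization $(u+k)(u+k+b)=0$.

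What the paper's longer computation buys is constructive information. It shows that $q_i,r_i$ are determined by $p_i,s_i$ and the two forms, which is the idea behind the explicit direct cube built in Lemma~\ref{1.12}.
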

\begin{proof}By $p_1s_2+p_2s_1+q_1r_2+q_2r_1=b \ne 0$, $p_1s_2+p_2s_1$ and $q_1r_2+q_2r_1$ are not both zero. Without loss of generality, let $d:=p_1s_2+p_2s_1 \ne 0$. From equation $(\ref{7})$ and $(\ref{9})$ we have 
\begin{equation}\notag
	\left\{\begin{aligned}
		&p_2q_1+p_1q_2=a_1,\\
		&s_2q_1+s_1q_2=c_2,
	\end{aligned}\right.
\end{equation} 
So by using Cramer's rule we find \[q_1=\frac{a_1s_1+c_2p_1}{d},q_2=\frac{p_2c_2+a_1s_2}{d}.\]Similarly, from equation $(\ref{8})$ and $(\ref{10})$ we can find \[r_1=\frac{a_2s_1+c_1p_1}{d},r_2=\frac{p_2c_1+a_2s_2}{d}\] by using Cramer's rule. Then,
\[\begin{aligned}
	b&=p_1s_2+p_2s_1+q_1r_2+q_2r_1\\
	&=(p_1s_2+p_2s_1)+\frac{a_1s_1+c_2p_1}{d}\cdot\frac{p_2c_1+a_2s_2}{d}+\frac{p_2c_2+a_1s_2}{d}\cdot\frac{a_2s_1+c_1p_1}{d}\\
	&=d+\frac{(a_1c_1+a_2c_2)(p_1s_2+p_2s_1)}{d^2}\\
	&=d+\frac{a_1c_1+a_2c_2}{d}\\
	&=d+\frac{k^2+bk}{d}.
\end{aligned}\]
If $k=0$, we have proved that $p_1s_2+p_2s_1=d=b$ and $q_1r_2+q_2r_1=b-d=0$. If $k \ne 0$, from $b=d+\frac{k^2+bk}{d}$ we find $d=k$ or $d=k+b$. So $p_1s_2+p_2s_1=k,q_1r_2+q_2r_1=k+b$ or $p_1s_2+p_2s_1=k+b,q_1r_2+q_2r_1=k$.\end{proof}
\begin{myremark}
	We can verify $\varphi(p_1s_2+p_2s_1)=a_1c_1+a_2c_2$ by direct computation and obtain a shorter proof, but the proof above can show the idea of how to find a Bhargava cube $\mathcal{A}$ such that $f_1^{\mathcal{A}}$ and $f_2^{\mathcal{A}}$ are given forms. If the condition $a_1c_1=a_2c_2+\varphi(k)$ is removed, we can still prove that 
	\[\varphi(p_1s_2+p_2s_1)=a_1c_1+a_2c_2,\]
	that is, equation $(\ref{3})$ implies that $a_1c_1 \equiv a_2c_2 \mod{\varphi(R)}$.
\end{myremark}

Analogous to the definition of direct composition of quadratic forms over $Z$ given by Gauss, we can define direct composition of quadratic forms over $R=\mathbb{F}_{2^n}[T]$ such that direct composition makes the set of classes of quadratic forms over $R$ into an Abelian group. From \[p_1s_2+p_2s_1+q_1r_2+q_2r_1=b\] we can find exactly one of $p_1s_2+p_2s_1$ and $q_1r_2+q_2r_1$ belongs to $R_b$, furthermore, under the condition of lemma $\ref{1.7}$, $p_1s_2+p_2s_1$ and $q_1r_2+q_2r_1$ can be determined if we know which one of them is in $R_b$. Symmetrically, there is exactly one of $p_2s_1+q_2r_1$ and $p_1s_2+q_1r_2$ in $R_b$. So we can give the next definition:

\begin{definition}\label{1.8}
	The quadratic form $(a_3,b,c_3)$ is called a direct composition of $(a_1,b,c_1)$ and $(a_2,b,c_2)$, if there exists $p_i,q_i,r_i,s_i,i=1,2$, such that equation $(\ref{3})$ holds and \[q_1r_2+q_2r_1 \in R_b, p_2s_1+q_2r_1 \in R_b.\] Call the cube in lemma $\ref{1.6}$ is direct if $q_1r_2+q_2r_1 \in R_b,p_2s_1+q_2r_1 \in R_b$.
\end{definition}

We shall next prove that direct composition makes the set of classes of quadratic forms over $R$ into an Abelian group. The first step is to show that for all $[(A_1,b,C_1)],[(A_2,b,C_2)]$ in $ C(b,\Delta)$, we can choose $(a_1,b,c_1) \in [(A_1,b,C_1)]$ and $(a_2,b,c_2) \in [(A_2,b,C_2)]$ such that we can find a direct composition of them easily.

\begin{lemma}\label{1.9}
	$\forall u \in R$, if $b \nmid u$, then $\exists v \in R$, such that $uv=qb+r$, where $\deg r < \deg b$, and $q(0)=1$.
\end{lemma}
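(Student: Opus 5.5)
Since $q(0)$ is the constant term of the quotient, the condition $q(0)=1$ says that $uv-r-b$ is divisible by $Tb$. So I would rewrite the statement as a congruence modulo $Tb$ and then use the ideal generated by $u$ in $R/(Tb)$.

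\emph{Step 1 (reformulation).} Let $d=\deg b\ge 0$ and let $\lambda\in\mathbb{F}_{2^n}^*$ be the leading coefficient of $b$. Suppose $uv=qb+r$ with $\deg r<d$. Then $q(0)=1$ holds exactly when $uv-r-b=(q-1)b\in TbR$, i.e.
\[uv\equiv b+r \pmod{Tb}.\]
Conversely, suppose some multiple $uv$ satisfies $uv\equiv b+r \pmod{Tb}$ with $\deg r<d$. Then $uv=b+r+Tbw$ for some $w\in R$. Hence $uv=(1+Tw)b+r$, and by uniqueness of division this is the division of $uv$ by $b$, with quotient $q=1+Tw$, so $q(0)=1$. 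The residues modulo $Tb$ have unique representatives of degree $\le d$. Those of the form $b+r$ with $\deg r<d$ are exactly the polynomials of degree exactly $d$ with leading coefficient $\lambda$. So it suffices to find a multiple of $u$ whose reduction modulo $Tb$ has degree exactly $d$ and leading coefficient $\lambda$.

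\emph{Step 2 (the ideal $(u)$ modulo $Tb$).} Since $R$ is a PID, the set of residues of multiples of $u$ modulo $Tb$ is the image of the ideal $(u)+(Tb)=(h)$, where $h=\gcd(u,Tb)$. So every multiple of $h$ is congruent modulo $Tb$ to some $uv$. Because $h\mid Tb$, we have $\deg h\le d+1$. If $\deg h=d+1$, then $h$ is an associate of $Tb$, so $Tb\mid u$ and in particular $b\mid u$, contradicting the hypothesis. Hence $\deg h\le d$.

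\emph{Step 3 (construction).} Let $\mu$ be the leading coefficient of $h$ and set $g=\lambda\mu^{-1}T^{\,d-\deg h}h$. Then $g$ is a multiple of $h$ of degree exactly $d\le \deg(Tb)-1$, so it is already reduced modulo $Tb$, and its leading coefficient is $\lambda$. By Step 2 there is $v\in R$ with $uv\equiv g\pmod{Tb}$, and by Step 1 this $v$ works. Concretely, from a B\'ezout relation $h=uv_0+Tbw_0$ one may take $v=\lambda\mu^{-1}T^{\,d-\deg h}v_0$.

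The only step that needs care is Step 2: the hypothesis $b\nmid u$ is used exactly once, to rule out $\deg h=d+1$. If $b\mid u$, every $uv$ is divisible by $b$ and no such $v$ exists.
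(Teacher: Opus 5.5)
Your proof is correct, and it takes a different route from the paper's.

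The paper uses $\gcd(u,b)$ rather than $\gcd(u,Tb)$. From a B\'ezout relation $ue_1=e_2b+(u,b)$ it notes that this is already a division with remainder $(u,b)$, which has degree $<\deg b$ because $b\nmid u$. It then fixes the constant term of the quotient by cases:
\begin{itemize}
\item if $e_2(0)\neq 0$, it takes $v=e_2(0)^{-1}e_1$;
\item if $e_2(0)=0$, it takes $v=e_1b/(u,b)$, which gives $uv=\bigl(e_2b/(u,b)+1\bigr)b$ with remainder $0$.
\end{itemize}
Your reformulation as a congruence modulo $Tb$ replaces this case split with the single choice $v=\lambda\mu^{-1}T^{d-\deg h}v_0$. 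It also makes explicit what is really needed, namely $\deg\gcd(u,Tb)\le\deg b$, i.e.\ $Tb\nmid u$. The paper's version avoids the auxiliary modulus $Tb$ but needs two cases.

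Your closing remark is false, though it lies outside the proof itself. If $b\mid u$ but $T\nmid u/b$, a constant $v$ works. For instance, $u=b$ and $v=1$ give $uv=1\cdot b+0$ with $q(0)=1$ and $r=0$, which your own Step 1 accepts. Conversely, if $Tb\mid u$, then $r=0$ and $q=uv/b\in TR$, so $q(0)=0$. Together with your argument this shows that a suitable $v$ exists if and only if $Tb\nmid u$. So the hypothesis $b\nmid u$ is sufficient but not necessary.
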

\begin{proof}There exists $e_1,e_2 \in R$, such that $ue_1+be_2=(u,b)$, and $\deg(u,b) < \deg b$ since $b \nmid u$. If $e_2(0) \ne 0$, let $v = e_2(0)^{-1}e_1$, then \[uv=ue_2(0)^{-1}e_1=e_2(0)^{-1}(e_2b+(u,b)),\quad\deg e_2(0)^{-1}(u,b)<\deg b \quad\text{and}\quad e_2(0)^{-1}e_2(0)=1. \]If $e_2(0) = 0$, let $v=\frac{e_1b}{(u,b)}$, then \[u\frac{e_1b}{(u,b)}=(\frac{e_2b}{(u,b)}+1)b,\quad(\frac{e_2b}{(u,b)}+1)(0)=1.\qedhere\]\end{proof}

The first paragraph of the proof of next lemma is from lemma 8.2.1 of Fang$^{\cite[ p.177]{Fang}}$.

\begin{lemma}\label{1.10}
	Suppose $(a,b,c)$ is a primitive quadratic form. $\forall m \in R$, there exists a primitive quadratic form $(A,b,C)$, such that $(A,b,C) \approx (a,b,c)$ and $(A,m)=1$.
\end{lemma}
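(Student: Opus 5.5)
We will argue in two stages: first produce an equivalent form whose first coefficient is prime to $m$, exactly as in the classical argument (Fang, Lemma 8.2.1), and then repair the possible failure of the $R_b$-condition of Definition \ref{1.2}.

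\textbf{Stage 1 (ordinary equivalence).} We only need $(A,m)=1$ where $A=ap^2+bpq+cq^2=f(p,q)$ with $(p,q)=1$, since any coprime pair extends to some $\gamma=\begin{pmatrix}p&q\\r&s\end{pmatrix}\in Sl_2(R)$. Let $P_1$ be the set of monic irreducible divisors of $m$. We split it into three parts.
\begin{enumerate}[label=(\roman*)]
\item Primes dividing $a$.
\item Primes not dividing $a$ but dividing $c$.
\item Primes dividing neither $a$ nor $c$.
\end{enumerate}
We choose $p$ to be the product of the primes in (ii) and (iii) that do not divide $a$, and $q$ to be the product of the primes in (i), so that $(p,q)=1$. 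We then check $\pi\nmid f(p,q)$ for each $\pi\mid m$:
\begin{itemize}
\item For $\pi$ in (i): $\pi\mid a,\ \pi\mid q$ but $\pi\nmid p$, and $\pi\nmid c$ by primitivity unless $\pi\mid b$. A short case check, using primitivity of $(a,b,c)$ in the form $\pi\nmid(a,b,c)$, shows $f(p,q)\equiv cq^2\not\equiv0$ is wrong as written. So we take $q$ as the product of (i) primes times $1$ and $p$ as the product of the others, giving $f\equiv cq^2$ with $\pi\mid q$. This vanishes, so we must instead use $p\equiv 0,\ q\equiv1 \pmod \pi$, which gives $f\equiv c\not\equiv 0$ when $\pi\mid a$.
\item For $\pi\nmid a$: take $p\equiv1,\ q\equiv0\pmod\pi$, so $f\equiv a\not\equiv0$.
\end{itemize}
The cleanest route is therefore to use the Chinese remainder theorem directly. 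Choose $q$ with $q\equiv1$ modulo the (i) primes and $q\equiv0$ modulo the others, and set $p=\prod_{\pi\notin(i)}\pi$. One then checks $(p,q)=1$ and $\pi\nmid f(p,q)$ for all $\pi\mid m$. If $c\equiv0$ as well as $a\equiv0\pmod\pi$, the condition would force $\pi\mid b$, contradicting primitivity; we should double-check whether this case needs $p\equiv q\equiv 1$, which gives $f\equiv b$.

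\textbf{Stage 2 (fixing the $R_b$ condition).} Completing $(p,q)$ to $\gamma\in Sl_2(R)$ gives a form $\gamma(a,b,c)=(A,b,C)$ with $(A,m)=1$. If $apr+bqr+cqs\in R_b$ we are done. Otherwise we use the freedom $(r,s)\mapsto(r+tp,s+tq)$, which keeps the first row (hence $A$) and the determinant. This changes the quantity $apr+bqr+cqs$ by
\[t(ap^2+bpq+cq^2)=tA .\]
So we need $t\in R$ with $tA\notin R_b$ after the shift; equivalently, since exactly one of $k,k+b$ lies in $R_b$, we want $tA\equiv b$ modulo $R_b$. If $b\nmid A$, Lemma \ref{1.9} gives $v$ with $Av=q'b+r'$, $\deg r'<\deg b$, $q'(0)=1\notin G$. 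Hence $Av\notin R_b$, and since $R_b$ has index $2$ in $R$ this flips the class, as required. If $b\mid A$, we first replace $m$ by $mb$ in Stage 1, ensuring $(A,b)=1$, so $b\nmid A$ unless $\deg b=0$. When $b$ is a nonzero constant, $R_b=\{qb: q(0)\in G\}$, and we need $tA$ with $(tA/b)(0)\notin G$; this is possible because $A(0)\neq0$ can be arranged, or because choosing $t$ as a suitable constant works once $A(0)\ne0$. To handle this uniformly, we apply Stage 1 with $m$ replaced by $mbT$.

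\textbf{Main obstacle.} The main difficulty is Stage 2's index-two bookkeeping. We must verify from Lemma \ref{1.1}'s construction that $R_b$ has index $2$ and that Lemma \ref{1.9} (or a constant multiple when $\deg b=0$) indeed produces an element outside $R_b$. The coprimality reduction in Stage 1 is routine.
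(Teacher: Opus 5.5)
The gap is in your Stage 1; your Stage 2 is correct. Overall you follow the paper's approach. You choose coprime $p,q$ with $A=ap^2+bpq+cq^2$ prime to a suitable modulus and complete to $\gamma\in Sl_2(R)$. If $apr+bqr+cqs\notin R_b$, you replace the second row by $(r+vp,\ s+vq)$, which shifts that quantity by $vA$, with $v$ supplied by Lemma~\ref{1.9}.

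Stage 2 is in fact more careful than the paper's proof. When $b$ is a nonzero constant, $b$ divides every $A$, so Lemma~\ref{1.9} says nothing. In that case $R_b=\{x : x(0)\in bG\}$, and if $T\mid A$ no multiple $vA$ leaves $R_b$. The paper invokes Lemma~\ref{1.9} there anyway. Your device of working modulo $mbT$ and taking $v=b/A(0)$ closes this case.

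You call Stage 1 routine but never carry it out correctly. The explicit choice you settle on is $q\equiv 1$ modulo the primes in (i), $q\equiv 0$ modulo the other primes of $m$, and $p=\prod_{\pi\notin(i)}\pi$. This fails: for a prime $\pi\mid m$ with $\pi\nmid a$ you get $p\equiv q\equiv 0 \pmod{\pi}$, so $\pi\mid (p,q)$ and $\pi\mid f(p,q)$. You also leave the case $\pi\mid a$, $\pi\mid c$ as something to double-check. And with any CRT-chosen $q$ you would still owe an argument that $(p,q)=1$ globally, not just at the primes of $m$.

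At each prime $\pi$ of the modulus, the local requirements are:
\begin{itemize}
\item if $\pi\nmid a$: $p\not\equiv 0$ and $q\equiv 0$, so $f\equiv ap^2$;
\item if $\pi\mid a$, $\pi\nmid c$: $p\equiv 0$ and $q\not\equiv 0$, so $f\equiv cq^2$;
\item if $\pi\mid a$, $\pi\mid c$: $p,q\not\equiv 0$, so $f\equiv bpq\not\equiv 0$ by primitivity.
\end{itemize}
The paper meets all three at once without CRT. Take $p$ to be the product of the primes of the modulus that divide $a$ but not $c$. Take $q$ to be the product of the primes of the modulus that do not divide $a$. Set either to $1$ if its set of primes is empty. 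These products run over disjoint sets of primes, so $(p,q)=1$ is automatic. With this choice applied to the modulus $mbT$, your Stage 2 goes through and the proof is complete.
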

\begin{proof}We first claim that $\exists p,q \in R $, such that $(ap^2+bpq+cq^2,mb)=1$ and $(p,q)=1$. Let $p$ be the product of all primes that divides $mb$ and $a$ but does not divide $c$, and $q$ be the product of all primes that divides $mb$ but not divide $a$, if no such prime exists let corresponding p or q be $1$. Then $(p,q)=1$. If $(ap^2+bpq+cq^2,mb) \ne 1$, there exists prime $d \mid m$ and $d \mid ap^2+bpq+cq^2$. 
\begin{enumerate}[label=(\arabic*)]
	\item If $d \nmid a$, then $d \mid q$. Then $d \mid p$ since $d \mid ap^2+bpq+cq^2$ and $d \nmid a$, which implies $d \mid (p,q)$ and contradicts$(p,q)=1$.
	\item If $d \mid a,d \mid c$, we have $d \nmid p,d \nmid q$. Then $d \mid b$ since $d \mid ap^2+bpq+cq^2$, which contradicts $(a,b,c)$ is a primitive form.
	\item If $d \mid a, d\nmid c$, we have $d \mid p, d \nmid q$, which contradict $d \mid ap^2+bpq+cq^2$.
\end{enumerate}
From the three cases above we find that $(ap^2+bpq+cq^2,mb)=1$, the claim holds.

There exist $q,r \in R$ such that $ps+qr=1$ since $(p,q)=1$. If $apr+bqr+cqs \in R_b$, let $\gamma = \begin{pmatrix}p & q \\r & s\end{pmatrix}$ and \[(A,b,C)=\gamma(a,b,c)=(ap^2+bpq+cq^2,b,ar^2+brs+cs^2),\]then $(A,b,C)$ satisfies the condition. If $apr+bqr+cqs \notin R_b$, from lemma $\ref{1.9}$ there exists $v \in R $ such that $v(ap^2+bpq+cq^2) \notin R_b$. Let $\gamma= \begin{pmatrix}p & q \\r+vp & s+vq\end{pmatrix}$. Then \[\det(\gamma)=p(s+vq)+q(r+vp)=ps+qr=1\] and \[ap(r+vp)+bq(r+vp)+cq(s+vq)=apr+bqr+cqs+v(ap^2+bpq+cq^2) \in R_b.\] Then $(A,b,C)=\gamma (a,b,c)$ satisfies the condition.
\end{proof}

\begin{lemma}\label{1.11}
	Suppose $(A_1,b,C_1)$ and $(A_2,b,C_2)$ are primitive quadratic forms. Then there exists primitive forms $(a_1,b,c_1) \approx (A_1,b,C_1)$ and $(a_2,b,c_2) \approx (A_2,b,C_2)$ such that \[(a_1,b)=1,(a_2,ba_1)=1.\]
\end{lemma}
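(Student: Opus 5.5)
This is a direct two-step application of lemma \ref{1.10}. First apply lemma \ref{1.10} to the primitive form $(A_1,b,C_1)$ with $m=b$. This gives a primitive form $(a_1,b,c_1) \approx (A_1,b,C_1)$ with $(a_1,b)=1$. With $a_1$ now fixed, apply lemma \ref{1.10} a second time, to $(A_2,b,C_2)$ with $m=ba_1$. This gives a primitive form $(a_2,b,c_2) \approx (A_2,b,C_2)$ with $(a_2,ba_1)=1$. Both forms are primitive, since lemma \ref{1.10} produces primitive forms (equivalently, $Sl_2(R)$-transforms of primitive forms are primitive, as observed in section 2.1). Both requirements of the statement are therefore met.

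The order of the two applications matters: $a_1$ has to be chosen first so that the modulus $m=ba_1$ in the second application is known. Lemma \ref{1.10} holds for every $m\in R$, so $m=ba_1$ is admissible. We need $a_1\neq 0$ only if we want $m\neq 0$, and this holds automatically: $(a_1,b)=1$ with $b\neq 0$ forces $a_1\neq 0$ unless $b$ is a unit. In that exceptional case $m=0$ is still acceptable, because the proof of lemma \ref{1.10} only uses the primes dividing $mb$.

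The only point that might need checking is that lemma \ref{1.10} really delivers \emph{proper} equivalence, i.e.\ that the transforming matrix lies in $S_{(a,b,c)}$. That is exactly what its second paragraph arranges by adjusting $\gamma$ through lemma \ref{1.9}, so nothing further is needed. I expect no real obstacle; the proof is two invocations of lemma \ref{1.10}.
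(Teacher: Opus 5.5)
Your core argument is exactly the paper's: Lemma \ref{1.10} applied to $(A_1,b,C_1)$ with $m=b$, then to $(A_2,b,C_2)$ with $m=ba_1$. The flaw is in your handling of the degenerate case. When $b$ is a unit, the condition $(a_1,b)=1$ is empty, so Lemma \ref{1.10} used as a black box may return $a_1=0$ (e.g.\ the form itself if $A_1=0$). You then claim that $m=ba_1=0$ ``is still acceptable'' because the proof of Lemma \ref{1.10} only uses primes dividing $mb$. That is false, for two reasons. First, every prime divides $mb=0$, so the products defining $p$ and $q$ in that proof are not defined. Second, the required conclusion $(a_2,ba_1)=(a_2,0)=1$ would force $a_2$ to be a unit, so $(A_2,b,C_2)$ would have to represent a unit. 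That fails in general: for instance $(T,1,T^2)$ in Example \ref{eg1}(3) represents no unit, since otherwise it would be equivalent to $(1,1,T^3)$.

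Here is how to dispose of this case correctly. In the setting where the lemma is used (Theorem \ref{1.16}, forms in $C(b,\Delta)$), $a_1\neq 0$ automatically. By equation (\ref{Arf}) proper equivalence preserves $ac \bmod \varphi(R)$, and $a_1=0$ would give $a_1c_1=\varphi(0)$, contradicting $\Delta\not\equiv 0$. For an arbitrary primitive form with $b$ a unit, it suffices to find some properly equivalent form with $a_1\neq 0$. If $A_1\neq 0$, take the form itself. If $A_1=0$, the matrix $\begin{pmatrix}1&T\\0&1\end{pmatrix}$ lies in $S_{(0,b,C_1)}$: the relevant quantity is $C_1T$, which lies in $R_b$ because it vanishes at $0$ and $b$ is constant. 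This matrix yields $a_1=T(b+C_1T)\neq 0$. Then run your second step with $m=ba_1\neq 0$. The paper's one-line proof passes over this case silently, so with this repair your argument is complete. (Relatedly, the adjustment via Lemma \ref{1.9} that you cite inside Lemma \ref{1.10} needs $b\nmid A$. The condition $(A,mb)=1$ only guarantees this when $b$ is not a unit. That is a gap in Lemma \ref{1.10} itself rather than in your argument.)
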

\begin{proof}
	By lemma $\ref{1.10}$ there exists primitive form $(a_1,b,c_1)\approx (A_1,b,C_1)$ such that $(a_1,b)=1$ and primitive form $(a_2,b,c_2) \approx (A_2,b,C_2)$ such that $(a_2,a_1b)=1$.
\end{proof}

\begin{lemma}\label{1.12}
	Suppose $(a_1,b,c_1)$ and $(a_2,b,c_2)$ are primitive forms and \[(a_1,b)=1,(a_2,ba_1)=1,\; a_1c_1 \equiv a_2c_2 \equiv \Delta \mod{\varphi(R)},\; a_1c_1+a_2c_2=k^2+bk,k \in R_b.\] Then there exists $e_1,e_2 \in R$, such that \[e_1a_1+e_2a_2=1,\; e_1a_1k \in R_b,\] and primitive form$(a_1a_2,b,e_1e_2k^2+c_1e_2+c_2e_1)$ is the direct composition of $(a_1,b,c_1)$ and $(a_2,b,c_2)$. Moreover, \[a_1a_2(e_1e_2k^2+c_1e_2+c_2e_1)=a_1c_1+\varphi(a_1e_1k).\]
\end{lemma}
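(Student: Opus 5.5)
The proof splits into three parts: choose $e_1,e_2$, build an explicit direct Bhargava cube, and verify the final identity algebraically.

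\emph{Choosing $e_1,e_2$.} Since $(a_2,ba_1)=1$, we have $(a_1,a_2)=1$, so there are $e_1^0,e_2^0$ with $e_1^0a_1+e_2^0a_2=1$. All other solutions are $e_1=e_1^0+ta_2$, $e_2=e_2^0+ta_1$ with $t\in R$ (signs do not matter in characteristic 2). This replacement changes $e_1a_1k$ by $t\,a_1a_2k$, so it suffices to choose $t$ with $e_1a_1k\in R_b$.
\begin{enumerate}[label=(\arabic*)]
\item If $b\nmid a_1a_2k$, Lemma \ref{1.9} gives $v$ with $v a_1a_2k\notin R_b$. Because $R_b$ has index $2$ in $(R,+)$ (Lemma \ref{1.1}), either $t=0$ or $t=v$ works.
\item If $b\mid a_1a_2k$, then $b\mid k$ because $(a_1a_2,b)=1$. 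Write $k=k'b$. Membership of a multiple $Qb$ of $b$ in $R_b$ only depends on $Q(0)\in G$, so we need $(e_1a_1k')(0)\in G$.
\begin{enumerate}[label=(\alph*)]
\item If $k'(0)=0$, this holds automatically.
\item If $k'(0)\neq 0$, a constant $t\in\mathbb F_{2^n}$ shifts $(e_1a_1k')(0)$ by $t\,a_1(0)a_2(0)k'(0)$. Here $a_1(0)a_2(0)\ne0$ is not guaranteed, but in that subcase $(e_1a_1k')(0)=0\in G$ anyway. Otherwise, as $t$ ranges over $\mathbb F_{2^n}$ the shift ranges over all of $\mathbb F_{2^n}$, so the value can be moved into $G$.
\end{enumerate}
\end{enumerate}

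\emph{Building the cube.} Set $c_3=e_1e_2k^2+c_1e_2+c_2e_1$. I will use Lemma \ref{1.6} and Lemma \ref{1.7}. By Lemma \ref{1.7}, a cube realizing $(a_1,b,c_1)$ and $(a_2,b,c_2)$ is determined by $p_1,p_2,s_1,s_2$ and $d=p_1s_2+p_2s_1\in\{k,k+b\}$. For directness we want $q_1r_2+q_2r_1=k\in R_b$, i.e.\ $d=k+b$. I would fill the cube with $a_3=p_2s_2+q_2r_2=a_1a_2$ as the intended front face, using entries built from $1$, $a_1$, $a_2$, $e_1k$, $e_2k$, $c_1e_2$, $c_2e_1$. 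The model is the classical Dirichlet/Gauss united-form cube, adapted to $B=b$, with $k$ playing the role of the common middle coefficient.

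Next I check equations (\ref{7})--(\ref{11}) using $e_1a_1+e_2a_2=1$ and $a_1c_1+a_2c_2=k^2+bk$. Lemma \ref{1.6} then gives $f_3^{\mathcal A}=(a_1a_2,b,c_3)$. The second directness condition $p_2s_1+q_2r_1\in R_b$ should reduce, after the substitution, to $e_1a_1k$ plus $0$ or $k$, up to the constant term. Making this reduction land exactly on $e_1a_1k\in R_b$ is why $e_1$ was normalized in the first step.

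\emph{The identity and primitivity.} The identity is a direct computation. Substituting $e_2a_2=1+e_1a_1$,
\[a_1a_2c_3=e_1a_1k^2+e_1^2a_1^2k^2+a_1c_1+e_1a_1(a_1c_1+a_2c_2)=a_1c_1+(e_1a_1k)^2+b\,e_1a_1k,\]
which equals $a_1c_1+\varphi(a_1e_1k)$. Primitivity is immediate because $(a_1a_2,b)=1$.

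The main obstacle is finding explicit cube entries that are polynomials rather than merely elements of $K$, and for which the directness conditions come out as exactly $k$ and $e_1a_1k$. Lemma \ref{1.7}'s Cramer formulas divide by $d$, so I would first try $p_1=1,\ p_2=0$-type normalizations and solve for the remaining entries. If that fails, I would reverse-engineer the cube from the desired value $a_3=a_1a_2$.
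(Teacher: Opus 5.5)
Your proposal has two genuine gaps. The main one is that the central claim of the lemma, that $(a_1a_2,b,e_1e_2k^2+c_1e_2+c_2e_1)$ is a \emph{direct} composition, is never proved. You say where the cube entries might come from and that the second directness quantity ``should reduce'' to $e_1a_1k$. But no cube is written down, equations (\ref{7})--(\ref{11}) are never checked, and the proposal ends with fallback strategies. The normalization you suggest does work, and it is exactly the cube the paper uses: $p_1=1$, $p_2=0$, $q_1=e_2k$, $q_2=a_1$, $r_1=e_1k$, $r_2=a_2$, $s_1=c_1e_2+c_2e_1$, $s_2=k+b$.
\begin{itemize}
\item Equations (\ref{7}) and (\ref{8}) are immediate.
\item $p_1s_2+p_2s_1+q_1r_2+q_2r_1=(k+b)+k(e_1a_1+e_2a_2)=b$.
\item Using $k(k+b)=a_1c_1+a_2c_2$, one gets $r_1s_2+r_2s_1=e_1(a_1c_1+a_2c_2)+a_2(c_1e_2+c_2e_1)=c_1$, and symmetrically $q_1s_2+q_2s_1=c_2$.
\item Lemma \ref{1.6} then gives $a_3=p_2s_2+q_2r_2=a_1a_2$ and $c_3=p_1s_1+q_1r_1=e_1e_2k^2+c_1e_2+c_2e_1$.
\item The two directness quantities are $q_1r_2+q_2r_1=k$ and $p_2s_1+q_2r_1=a_1e_1k$. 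This is precisely why $e_1$ had to be normalized.
\end{itemize}
Without an explicit, verified cube of this kind, what you have is a plan rather than a proof.

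The second gap is a false step in your subcase 2(b). If $a_2(0)=0$, then $e_1a_1+e_2a_2=1$ forces $e_1(0)a_1(0)=1$. Hence $(e_1a_1k')(0)=k'(0)\neq 0$, not $0$ as you claim. The correct conclusion needs the hypothesis $k\in R_b$, i.e.\ $k'(0)\in G$ for $k=k'b$, which your argument never uses. That hypothesis is indispensable: if $a_2(0)=0$ and $k'(0)\notin G$, every admissible $e_1$ gives $(e_1a_1k')(0)=k'(0)\notin G$, so no choice works. This is exactly the paper's case (2).

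The rest of your proposal is correct:
\begin{itemize}
\item case (1), via Lemma \ref{1.9} and the index-two property of $R_b$;
\item the subcase $a_1(0)=0$;
\item the shift argument when $a_1(0)a_2(0)\neq 0$;
\item the final identity $a_1a_2c_3=a_1c_1+\varphi(a_1e_1k)$;
\item primitivity from $(a_1a_2,b)=1$.
\end{itemize}
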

\begin{proof}
	There exists $e_1',e_2' \in R$ such that $a_1e_1'+a_2e_2'=1$ since $(a_2,ba_1)=1$. If $a_1e_1'k \in R_b$, let $e_1=e_1',e_2=e_2'$. If $a_1e_1'k \notin R_b$, consider \[a_1(e_1'+va_2)+a_2(e_2'+va_1)=a_1e_1'+a_2e_2'=1.\]
	\begin{enumerate}[label=(\arabic*)]
		\item If $b \mid k$ and $a_2(0) \ne 0$, let $v=\frac{e_1'(0)}{a_2(0)}$, then $T \mid e_1'+va_2$, and $Tb \mid (e_1'+va_2)k$, so\\ $(e_1'+va_2)k \in R_b$. Let $e_1=e_1'+va_2,e_2=e_2'+va_1$.
		\item If $b \mid k$ and $a_2(0) =0 $, let $G$ be the subgroup of $(\mathbb{F}_{2^n},+)$ in lemma $\ref{1.1}$, then \[a_1(0)e_1'(0)=1-a_2(0)e_2'(0)=1,\; k=qb,\; q \in R, q(0) \in G.\] Then $a_1e_1'k=(a_1e_1'q)b$, $(a_1e_1'q)(0)=q(0) \in G$, so $a_1e_1'k \in R_b$, which contradicts $a_1e_1'k \notin R_b$. 
		\item  If $b\nmid k$, then $b \nmid ka_1a_2$ since $(a_1,b)=(a_2,b)=1$. From lemma $\ref{1.9}$, there exists $v \in R$ such that $va_1a_2k \notin R_b$, then \[a_1(e_1'+va_2)k=a_1e_1'k + va_1a_2k \in R_b.\] Let $e_1=e_1'+va_2,e_2=e_2'+va_1$. 
	\end{enumerate}Then we find $e_1,e_2 \in R$ such that $e_1a_1+e_2a_2=1$, $e_1a_1k \in R_b$.
	
	Let $\mathcal{A}$ be the cube\[\xymatrix@W=3em{&
		1\ar@{-}[rr]\ar@{-}[ld]\ar@{-}[dd]& &
		e_2k\ar@{-}[ld]\ar@{-}[dd]\\
		0\ar@{-}[rr]\ar@{-}[dd]& &
		a_1\ar@{-}[dd]&\\&
		e_1k\ar@{-}[rr]\ar@{-}[ld]& &
		c_1e_2+c_2e_1\ar@{-}[ld]\\
		a_2\ar@{-}[rr]& &
		k+b&}\]
	We have\[(k+b)+(e_2k)a_2+a_1(e_1k)=k+b+k(e_1a_1+e_2a_2)=k+b+k=b,\]
	\begin{equation}\notag
		\begin{aligned}
			\begin{vmatrix}
				e_1k & c_1e_2+c_2e_1 \\
				a_2 & k+b
			\end{vmatrix}&=e_1k(k+b)+a_2(c_1e_2+c_2e_1)=e_1(a_1c_1+a_2c_2)+a_2(c_1e_2+c_2e_1)\\&=c_1(e_1a_1+e_2a_2)=c_1,
		\end{aligned}
	\end{equation}
	\begin{equation}\notag
		\begin{aligned}
			\begin{vmatrix}
				e_2k & a_1 \\
				c_1e_2+c_2e_1 & k+b
			\end{vmatrix}&=e_2k(k+b)+a_1(c_1e_2+c_2e_1)=e_2(a_1c_1+a_2c_2)+a_1(c_1e_2+c_2e_1)\\&=c_2(e_1a_1+e_2a_2)=c_2.
		\end{aligned}
	\end{equation}
	Then \begin{equation}\notag
		f_1^{\mathcal{A}}=(\begin{vmatrix}
			1 & e_2k \\
			0 & a_1
		\end{vmatrix},b,\begin{vmatrix}
		e_1k & c_1e_2+c_2e_1 \\
		a_2 & k+b
		\end{vmatrix})=(a_1,b,c_1),
	\end{equation}
	\begin{equation}\notag
		f_2^{\mathcal{A}}=(\begin{vmatrix}
			1 & 0 \\
			e_1k & a_2
		\end{vmatrix},b,\begin{vmatrix}
			e_2k & a_1 \\
			c_1e_2+c_2e_1 & k+b
		\end{vmatrix})=(a_2,b,c_2),
	\end{equation}
	\begin{equation}\notag
		f_3^{\mathcal{A}}=(\begin{vmatrix}
			0 & a_1 \\
			a_2 & k+b
		\end{vmatrix},b,\begin{vmatrix}
			1 & e_2k \\
			e_1k & c_1e_2+c_2e_1
		\end{vmatrix})=(a_1a_2,b,e_1e_1k^2+c_1e_2+c_2e_1).
	\end{equation}
	Also,\[a_1(e_1k)+(e_2k)a_2=k \in R_b, a_1(e_1k) \in R_b.\]By lemma $\ref{1.6}$ and definition $\ref{1.8}$, $(a_1a_2,b,e_1e_2k^2+c_1e_2+c_2e_1)$ is the direct composition of $(a_1,b,c_1)$ and $(a_2,b,c_2)$. We have that $(a_1a_2,b,e_1e_2k^2+c_1e_2+c_2e_1)$ is a primitive form since $(a_1,b)=(a_2,b)=1$.
	Moreover,
		\begin{align}\notag
			a_1a_2(e_1e_2k^2+c_1e_2+c_2e_1)&=a_1a_2e_1e_2k^2+a_1a_2c_1e_2+a_1a_2c_2e_1\\
			\notag&=a_1e_1(1+a_1e_1)k^2+a_1c_1(1+a_1e_1)+a_1e_1(a_1c_1+k^2+bk)\\
			\notag&=(a_1e_1k)^2+a_1c_1+a_1e_1bk\\
			\notag&=a_1c_1+\varphi(a_1e_1k).\qedhere
		\end{align}
	
\end{proof}

The next two lemma will be used to prove that direct composition can give a well-defined operation over $C(b,\Delta)$. Lemma $\ref{1.13}$ is given by Gauss to solve the similar question of quadratic forms over $\mathbf{Z}$( see lemma 8.2.3 of Fang $^{\cite[p.178]{Fang}}$), and lemma $\ref{1.14}$ aims to treat the difference between direct composition over $\mathbf{Z}$ and $\mathbb{F}_{2^n}[T]$.

\begin{lemma}\label{1.13}
	Suppose $m \ge 2$, and \[A=\begin{pmatrix}
		p_1 & p_2 & \cdots & p_m \\
		q_1 & q_1 & \cdots & q_m
	\end{pmatrix},A'=\begin{pmatrix}
	p'_1 & p'_2 & \cdots & p'_m \\
	q'_1 & q'_1 & \cdots & q'_m
	\end{pmatrix}\] satisfy\begin{enumerate}
	\item $A,A'$ have same second-order minors,
	\item All second-order minors of $A$ are coprime.
	\end{enumerate}Then there exists $\gamma \in Sl_2(R)$, such that $\gamma A=A'$.
\end{lemma}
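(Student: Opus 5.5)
The plan is to write down an explicit $\gamma$ as a linear combination of "local" candidates, one for each pair of columns, weighted by a Bézout relation among the minors. Denote the $k$-th columns of $A$ and $A'$ by $v_k=(p_k,q_k)^T$ and $v'_k=(p'_k,q'_k)^T$. For $i<j$ put
\[d_{ij}=p_iq_j+p_jq_i=\det(v_i,v_j),\]
and similarly $d'_{ij}=\det(v'_i,v'_j)$; the hypothesis is $d_{ij}=d'_{ij}$. Extend the notation to all pairs by setting $d_{ii}=0$ and $d_{ji}=d_{ij}$; in characteristic $2$ there are no sign issues. Since the $d_{ij}$ are coprime and $R$ is a PID, I fix $x_{ij}\in R$ with
\[\sum_{i<j}x_{ij}d_{ij}=1.\]

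The key identity is the two-dimensional Grassmann--Plücker relation: for any three vectors in $R^2$,
\[d_{ij}\,v_k=d_{kj}\,v_i+d_{ik}\,v_j.\]
This is Cramer's rule cleared of denominators, and it is checked coordinatewise by expanding a $3\times3$ determinant with a repeated row. The same relation holds for the $v'_k$ with the same coefficients, because $d'=d$.

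Let $A_{ij}=(v_i\ v_j)$ and $A'_{ij}=(v'_i\ v'_j)$, and let $\operatorname{adj}$ denote the adjugate. Then
\[\operatorname{adj}(A_{ij})\,v_k=(d_{kj},\,d_{ik})^T.\]
Combining this with the Plücker relation for $A'$ gives
\[A'_{ij}\operatorname{adj}(A_{ij})\,v_k=d_{kj}v'_i+d_{ik}v'_j=d_{ij}v'_k.\]
Now define
\[\gamma=\sum_{i<j}x_{ij}\,A'_{ij}\operatorname{adj}(A_{ij})\in M_2(R).\]
For every $k$ this yields
\[\gamma v_k=\Bigl(\sum_{i<j}x_{ij}d_{ij}\Bigr)v'_k=v'_k,\]
that is, $\gamma A=A'$.

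It remains to show $\gamma\in Sl_2(R)$. Taking $2\times2$ minors of $\gamma A=A'$ gives $\det(\gamma)\,d_{ij}=d'_{ij}=d_{ij}$ for all $i<j$. The $d_{ij}$ are coprime, so not all are zero; cancelling a nonzero one in the domain $R$ gives $\det\gamma=1$.

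The only step needing care is the Plücker relation and the adjugate computation, which are routine. The main conceptual point is that the individual local candidates $A'_{ij}\operatorname{adj}(A_{ij})$ equal $d_{ij}\gamma$ rather than $\gamma$ itself, so one must avoid dividing by $d_{ij}$ and instead use the Bézout combination.
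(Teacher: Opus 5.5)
Your proof is correct. The paper gives no proof of this lemma; it only attributes it to Gauss and cites Lemma 8.2.3 of Fang. Your construction $\gamma=\sum_{i<j}x_{ij}A'_{ij}\,\mathrm{adj}(A_{ij})$ is Gauss's classical argument (Disquisitiones, Art.~234) in matrix form. It therefore supplies the omitted proof over $R$, using only the B\'ezout relation in the PID $R$ and the two-dimensional Pl\"ucker identity. The last step does not even need cancellation in a domain: multiplying $\det(\gamma)d_{ij}=d_{ij}$ by $x_{ij}$ and summing gives $\det\gamma=1$ directly.
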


\begin{lemma}\label{1.14}
	Suppose $\gamma=\begin{pmatrix}
		x&y\\z&w
	\end{pmatrix}\in Sl_2(R)$.
	Let \[\mathcal{A}= \begin{gathered}\xymatrix{&
		p_1\ar@{-}[rr]\ar@{-}[ld]\ar@{-}[dd]& &
		q_1\ar@{-}[ld]\ar@{-}[dd]\\
		p_2\ar@{-}[rr]\ar@{-}[dd]& &
		q_2\ar@{-}[dd]&\\&
		r_1\ar@{-}[rr]\ar@{-}[ld]& &
		s_1\ar@{-}[ld]\\
		r_2\ar@{-}[rr]& &
		s_2&}\end{gathered},f_i^{\mathcal{A}}=(a_i,b,c_i),i=1,2,3,\]
		\[^i\gamma\mathcal{A}=\begin{gathered}\xymatrix{&
		p_{1i}\ar@{-}[rr]\ar@{-}[ld]\ar@{-}[dd]& &
		q_{1i}\ar@{-}[ld]\ar@{-}[dd]\\
		p_{2i}\ar@{-}[rr]\ar@{-}[dd]& &
		q_{2i}\ar@{-}[dd]&\\&
		r_{1i}\ar@{-}[rr]\ar@{-}[ld]& &
		s_{1i}\ar@{-}[ld]\\
		r_{2i}\ar@{-}[rr]& &
		s_{2i}&}\end{gathered},i=1,2,3.\]
	Then for $i=1,2,3$, \[q_1r_2+q_2r_1+q_{1i}r_{2i}+q_{2i}r_{1i} \in R_b,p_2s_1+q_2r_1+p_{2i}s_{1i}+q_{2i}r_{1i} \in R_b\] if and only if \[a_ixz+byz+c_izw \in R_b.\] In particular, if $q_1r_2+q_2r_1,p_2s_1+q_2r_1 \in R_b$ and $\gamma_1 \in S_{(a_1,b,c_1)},\gamma_2 \in S_{(a_2,b,c_2)}$, then\\ $f_3^{^2\gamma_2^1\gamma_1\mathcal{A}}=(a_3,b,c_3)$ is a direct composition of $f_1^{^2\gamma_2^1\gamma_1\mathcal{A}}=\gamma_1(a_1,b,c_1)$ and\\ $f_2^{^2\gamma_2^1\gamma_1\mathcal{A}}=\gamma_2(a_2,b,c_2)$.
\end{lemma}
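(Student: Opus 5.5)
The plan is a direct computation. I will treat each of the three actions $^1\gamma,{}^2\gamma,{}^3\gamma$ separately and express the new quantities
\[Q':=q_{1i}r_{2i}+q_{2i}r_{1i},\qquad P':=p_{2i}s_{1i}+q_{2i}r_{1i}\]
in terms of the old entries. Throughout I write $Q:=q_1r_2+q_2r_1$, $P:=p_1s_2+p_2s_1$ and $P_2:=p_2s_1+q_2r_1$. By equations $(\ref{7})$--$(\ref{11})$ and Lemma \ref{1.6}, every degree-two expression that appears will collapse to one of $a_j$, $c_j$, $P$, $Q$ or $b=P+Q$.

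For $i=1$ the action replaces $(M_1,N_1)$ by $(xM_1+yN_1,\,zM_1+wN_1)$. Thus $p_j\mapsto xp_j+yr_j$, $q_j\mapsto xq_j+ys_j$, $r_j\mapsto zp_j+wr_j$ and $s_j\mapsto zq_j+ws_j$. Expanding gives
\[Q'=xz\,a_1+xw\,Q+yz\,P+yw\,c_1 .\]
Using $xw+yz=1$ and $P+Q=b$, this becomes
\[Q+Q'=a_1xz+b\,yz+c_1yw ,\]
which is exactly the quantity $apr+bqr+cqs$ from Definition \ref{1.2}, evaluated at $\gamma$ and at the form $f_1^{\mathcal A}$. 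I will check the indexing against $(\ref{5})$ so that the variable names match the statement. The same expansion for $P'$ should give $P_2+P'$ equal either to the same quantity or to $0$.

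For $i=2$ and $i=3$ the computation is identical after relabelling the cube's slicing: $(M_2,N_2)$ for $i=2$ and $(M_3,N_3)$ for $i=3$. In each case one of the two differences equals $a_ixz+byz+c_i\,(\cdot)$, and the other is $0$ or the same quantity. Since $R_b$ is a subgroup of $(R,+)$ by Lemma \ref{1.1}, both sums lie in $R_b$ if and only if that single quantity lies in $R_b$, i.e.\ if and only if $\gamma\in S_{f_i^{\mathcal A}}$. This proves the equivalence.

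For the ``in particular'' statement, first apply $^1\gamma_1$. By the case $i=1$ and $\gamma_1\in S_{(a_1,b,c_1)}$, the differences in $Q$ and $P_2$ lie in $R_b$, so the new cube is still direct. By $(\ref{5})$ its forms are $\gamma_1(a_1,b,c_1)$, $(a_2,b,c_2)$ and $(a_3,b,c_3)$. Next apply $^2\gamma_2$. Its second form is still $(a_2,b,c_2)$, so $\gamma_2\in S_{(a_2,b,c_2)}$ gives, by the case $i=2$, directness again. By $(\ref{5})$ the resulting forms are $\gamma_1(a_1,b,c_1)$, $\gamma_2(a_2,b,c_2)$ and $(a_3,b,c_3)$. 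Lemma \ref{1.6} and Definition \ref{1.8} then finish the argument.

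The main obstacle is the bookkeeping. The three slicings permute the roles of $p,q,r,s$ differently, and the convention of $(\ref{5})$ (row action $f((x,y)\gamma)$ versus the action on the pair $(M_i,N_i)$) must be matched so that the difference comes out as $a_ixz+byz+c_i\cdot(\cdot)$ with the right variables. I would first do $i=1$ carefully, then obtain $i=2$ and $i=3$ by the cube symmetries.
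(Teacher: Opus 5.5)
Your proposal is correct and follows the paper's own proof. In both, one expands the entries of ${}^i\gamma\mathcal{A}$ and uses $xw+yz=1$ and $b=P+Q$ to show that one of the two sums equals $a_ixz+byz+c_iyw$ while the other is $0$ or that same quantity; the ``in particular'' part then follows because $R_b$ is a subgroup and from equation (\ref{5}). The only differences are minor: the paper writes out $i=2$ and calls the other cases symmetric, whereas you write out $i=1$, and your computation correctly gives $c_iyw$, matching Definition \ref{1.2}, where the printed statement has $c_izw$.
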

\begin{proof}
	For $i=2$, the cube $^2\gamma \mathcal{A}$ is \[\xymatrix{&
		xp_1+yq_1\ar@{-}[rr]\ar@{-}[ld]\ar@{-}[dd]& &
		zp_1+wq_1\ar@{-}[ld]\ar@{-}[dd]\\
		xp_2+yq_2\ar@{-}[rr]\ar@{-}[dd]& &
		zp_2+wq_2\ar@{-}[dd]&\\&
		xr_1+ys_1\ar@{-}[rr]\ar@{-}[ld]& &
		zr_1+ws_1\ar@{-}[ld]\\
		xr_2+ys_2\ar@{-}[rr]& &
		zr_2+ws_2&}\]
	Then\begin{equation}\notag\begin{aligned}
		q_{12}r_{22}+q_{22}r_{12}&=(zp_1+wq_1)(xr_2+ys_2)+(zp_2+wq_2)(xr_1+ys_1)\\
		&=xz(p_1r_2+p_2r_1)+yz(p_1s_2+p_2s_1)+yw(q_1s_2+q_2s_1)+(1+yz)(q_1r_2+q_2r_1)\\
		&=a_2xz+byz+c_2yw+q_1r_2+q_2r_1,\\
		p_{22}s_{12}+q_{22}r_{12}&=(xp_2+yq_2)(zr_1+ws_1)+(zp_2+wq_2)(xr_1+ys_1)\\
		&=(yz+xw)(p_2s_1+q_2r_1)\\
		&=p_2s_1+q_2r_1.
	\end{aligned}\end{equation}
    So \[q_1r_2+q_2r_1+q_{12}r_{22}+q_{22}r_{12}=a_2xz+byz+c_2yw,\]\[p_2s_1+q_2r_1+p_{22}s_{12}+q_{22}r_{12}=0.\]
    Symmetrically, for $i=1$ we have\[q_1r_2+q_2r_1+q_{11}r_{21}+q_{21}r_{11}=a_1xz+byz+c_1yw,\]\[p_2s_1+q_2r_1+p_{21}s_{11}+q_{21}r_{11}=a_1xz+byz+c_1yw,\]
    and for $i=3$ we have \[q_1r_2+q_2r_1+q_{13}r_{23}+q_{23}r_{13}=0,\]\[p_2s_1+q_2r_1+p_{23}s_{13}+q_{23}r_{13}=a_3xz+byz+c_3zw.\]
    So for $i=1,2,3$, $q_1r_2+q_2r_1+q_{1i}r_{2i}+q_{2i}r_{1i} \in R_b,p_2s_1+q_2r_1+p_{2i}s_{1i}+q_{2i}r_{1i} \in R_b$ if and only if $a_ixz+byz+c_izw \in R_b$. In particular, if $q_1r_2+q_2r_1,p_2s_1+q_2r_1 \in R_b$ and $\gamma_1 \in S_{(a_1,b,c_1)},\gamma_2 \in S_{(a_2,b,c_2)}$, then $^2\gamma_2^1\gamma_1\mathcal{A}$ is direct.
\end{proof}

\begin{lemma}\label{1.15}
	Suppose primitive form $(a_1,b,c_1) \approx (A_1,b,C_1)$, $(a_2,b,c_2)\approx (A_2,b,C_2)$, $(a_3,b,c_3)$ is a direct composition of $(a_1,b,c_1)$ and $(a_2,b,c_2)$, $(A_3,b,C_3)$ is a direct composition of $(A_1,b,C_1)$ and $(A_2,b,C_2)$. Then $(a_3,b,c_3) \approx (A_3,b,C_3)$.
\end{lemma}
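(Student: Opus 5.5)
The plan is Gauss's classical argument: first move both given compositions onto the same pair of outer faces, then compare the two resulting cubes with Lemma \ref{1.13}.

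\textbf{Step 1 (move the first cube to the new faces).} Let $\mathcal{A}$ be a direct cube with $f_i^{\mathcal{A}}=(a_i,b,c_i)$, $i=1,2,3$, and let $\mathcal{B}$ be a direct cube with $f_i^{\mathcal{B}}=(A_i,b,C_i)$. Choose $\gamma_1\in S_{(a_1,b,c_1)}$ with $\gamma_1(a_1,b,c_1)=(A_1,b,C_1)$, and $\gamma_2\in S_{(a_2,b,c_2)}$ with $\gamma_2(a_2,b,c_2)=(A_2,b,C_2)$. By the "in particular" part of Lemma \ref{1.14} and by (\ref{5}), the cube $\mathcal{A}'={}^2\gamma_2{}^1\gamma_1\mathcal{A}$ is again direct. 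It satisfies $f_1^{\mathcal{A}'}=(A_1,b,C_1)$, $f_2^{\mathcal{A}'}=(A_2,b,C_2)$ and $f_3^{\mathcal{A}'}=(a_3,b,c_3)$. So it suffices to prove the following: two direct cubes with the same first and second faces have properly equivalent third faces.

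\textbf{Step 2 (equal minors).} Write the entries of $\mathcal{A}'$ and $\mathcal{B}$ as $2\times 4$ matrices with rows $(p_1,q_1,r_1,s_1)$ and $(p_2,q_2,r_2,s_2)$, the coefficients of $L_1,L_2$. The six $2\times2$ minors are
\[p_1q_2+p_2q_1,\quad p_1r_2+p_2r_1,\quad q_1s_2+q_2s_1,\quad r_1s_2+r_2s_1,\quad p_1s_2+p_2s_1,\quad q_1r_2+q_2r_1.\]
By (\ref{7})--(\ref{10}), the first four are $A_1,A_2,C_2,C_1$ for both cubes.

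For the last two, write $A_1C_1+A_2C_2=\varphi(k)$. By Lemma \ref{1.7}, $\{p_1s_2+p_2s_1,\,q_1r_2+q_2r_1\}=\{k,k+b\}$. By Lemma \ref{1.1}, exactly one of $k,k+b$ lies in $R_b$, and directness forces $q_1r_2+q_2r_1\in R_b$. Hence both remaining minors are also determined by the faces alone, so $\mathcal{A}'$ and $\mathcal{B}$ have identical minors.

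The minors are coprime: they include $A_1$ and $C_1$, and the sum of the last two is $b$, so any common divisor divides $A_1,b,C_1$, which are coprime by primitivity. Lemma \ref{1.13} then gives $\gamma\in Sl_2(R)$ carrying the matrix of $\mathcal{A}'$ to that of $\mathcal{B}$. This left multiplication mixes $(M_3,N_3)$, so $\mathcal{B}={}^3\gamma\mathcal{A}'$ (possibly after transposing $\gamma$ or swapping its entries to match the convention $(pM_3+qN_3,rM_3+sN_3)$). By (\ref{5}), $(A_3,b,C_3)=\gamma(a_3,b,c_3)$.

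\textbf{Step 3 (properness), the main obstacle.} It remains to show $\gamma\in S_{(a_3,b,c_3)}$. By the $i=3$ case of Lemma \ref{1.14}, the difference between the quantities $p_2s_1+q_2r_1$ of $\mathcal{A}'$ and of ${}^3\gamma\mathcal{A}'=\mathcal{B}$ equals $a_3xz+byz+c_3zw$. Both cubes are direct, so both quantities lie in the group $R_b$, and therefore so does this difference.

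The delicate point is to check that, with the precise orientation of $\gamma$ produced by Lemma \ref{1.13} and the labelling of $M_3,N_3$, this expression is exactly the quantity $apr+bqr+cqs$ from Definition \ref{1.2} for $\gamma$ acting on $(a_3,b,c_3)$. I expect this to require rewriting $\gamma$ as its transpose, or conjugating by $\begin{pmatrix}0&1\\1&0\end{pmatrix}$. I would first verify the identification by direct expansion as in the proof of Lemma \ref{1.14}, using $xw+yz=1$; if it fails as stated, I would redo the $i=3$ computation with the matrix actually delivered by Lemma \ref{1.13}. Once the identification holds, $(a_3,b,c_3)\approx(A_3,b,C_3)$.
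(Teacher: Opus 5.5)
Your proposal is correct and is essentially the paper's proof: the same Lemma \ref{1.7}-plus-directness step pins down $p_1s_2+p_2s_1$ and $q_1r_2+q_2r_1$, Lemma \ref{1.13} produces $\gamma$, and the $i=3$ case of Lemma \ref{1.14} gives properness (the paper instead transports the cube with faces $(A_i,b,C_i)$ onto $(a_1,b,c_1),(a_2,b,c_2)$, which is a purely cosmetic difference). The worry in Step 3 dissolves: the $i=3$ computation actually yields $a_3xz+byz+c_3yw$ (the ``$c_3zw$'' in the paper is a typo), which is exactly the quantity $apr+bqr+cqs$ of Definition \ref{1.2} for the same matrix $\gamma'$ with $\mathcal{B}={}^3\gamma'\mathcal{A}'$; with your row order $(p_1,q_1,r_1,s_1)$ over $(p_2,q_2,r_2,s_2)$ that matrix is $\gamma'=J\gamma J$ with $J=\begin{pmatrix}0&1\\1&0\end{pmatrix}$ (the paper sidesteps this by listing the $M_3$-row first).
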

\begin{proof}
	Since $(a_1,b,c_1) \approx (A_1,b,C_1)$,$(a_2,b,c_2)\approx (A_2,b,C_2)$, there exists $\gamma_1 \in S_{(A_1,b,C_1)}$, $\gamma_2 \in S_{(A_2,b,C_2)}$ such that \[(a_1,b,c_1) = \gamma_1(A_1,b,C_1),\; (a_2,b,c_2)= \gamma_2(A_2,b,C_2).\]
	By lemma $\ref{1.6}$, there exists cubes \[\mathcal{A}=\begin{gathered}
		\xymatrix@R=0.8cm@C=0.8cm@H=0.4cm{&
		p_1\ar@{-}[rr]\ar@{-}[ld]\ar@{-}[dd]& &
		q_1\ar@{-}[ld]\ar@{-}[dd]\\
		p_2\ar@{-}[rr]\ar@{-}[dd]& &
		q_2\ar@{-}[dd]&\\&
		r_1\ar@{-}[rr]\ar@{-}[ld]& &
		s_1\ar@{-}[ld]\\
		r_2\ar@{-}[rr]& &
		s_2&}	\end{gathered},\quad \mathcal{A}'=\begin{gathered}\xymatrix@R=0.8cm@C=0.8cm@H=0.4cm{&
		p'_1\ar@{-}[rr]\ar@{-}[ld]\ar@{-}[dd]& &
		q'_1\ar@{-}[ld]\ar@{-}[dd]\\
		p'_2\ar@{-}[rr]\ar@{-}[dd]& &
		q'_2\ar@{-}[dd]&\\&
		r'_1\ar@{-}[rr]\ar@{-}[ld]& &
		s'_1\ar@{-}[ld]\\
		r'_2\ar@{-}[rr]& &
		s'_2&}\end{gathered}\]such that \[f_i^{\mathcal{A}}=(A_i,b,C_i),f_i^{\mathcal{A}'}=(a_i,b,c_i),i=1,2,3.\]and\[q_1r_2+q_2r_1,\; p_2s_1+q_2r_1,\; q'_1r'_2+q'_2r'_1,\; p'_2s'_1+q'_2r'_1 \in R_b.\]
	Let \[^1\gamma_1^2\gamma_2\mathcal{A}=\begin{gathered}\xymatrix@R=0.8cm@C=0.8cm@H=0.4cm{&
		p''_1\ar@{-}[rr]\ar@{-}[ld]\ar@{-}[dd]& &
		q''_1\ar@{-}[ld]\ar@{-}[dd]\\
		p''_2\ar@{-}[rr]\ar@{-}[dd]& &
		q''_2\ar@{-}[dd]&\\&
		r''_1\ar@{-}[rr]\ar@{-}[ld]& &
		s''_1\ar@{-}[ld]\\
		r''_2\ar@{-}[rr]& &
		s''_2&}\end{gathered}.\]
	Then from lemma $\ref{1.14}$ and $\gamma_1 \in S_{(A_1,b,C_1)},\gamma_2 \in S_{(A_2,b,C_2)}$ we have\[q''_1r''_2+q''_2r''_1,\; p''_2s''_1+q''_2r''_1 \in R_b.\] From equation $(\ref{5})$ we have \[f_1^{^1\gamma_1^2\gamma_2\mathcal{A}}=(a_1,b,c_1),\; f_2^{^1\gamma_1^2\gamma_2\mathcal{A}}=(a_2,b,c_2),\; f_3^{^1\gamma_1^2\gamma_2\mathcal{A}}=(A_3,b,C_3).\] Suppose $a_1c_1+a_2c_2=\varphi(k),k \in R_b$. Then by lemma $\ref{1.7}$ and $q'_1r'_2+q'_2r'_1,q''_1r''_2+q''_2r''_1 \in R_b$, \[q'_1r'_2+q'_2r'_1=q''_1r''_2+q''_2r''_1=k,\; p'_1s'_2+p'_2s'_1=p''_1s''_2+p''_2s''_1=k+b.\]Then\[M(\mathcal{A}'):=\begin{pmatrix}
	p'_2 & q'_2 & s'_2 & r'_2 \\
	p'_1 & q'_1 & s'_1 & r'_1
	\end{pmatrix} \quad \text{and}\quad M(^1\gamma_1^2\gamma_2\mathcal{A}):=\begin{pmatrix}
	p''_2 & q''_2 & s''_2 & r''_2 \\
	p''_1 & q''_1 & s''_1 & r''_1
	\end{pmatrix}\]satisfy the condition in lemma $\ref{1.13}$( all second-order minors of $M(\mathcal{A}')$ are coprime since form $(a_1,b,c_1)$ is primitive), so there exists $\gamma \in Sl_2(R)$ such that $\gamma M(\mathcal{A}')=M(^1\gamma_1^2\gamma_2\mathcal{A})$. That is,  \[^3\gamma\mathcal{A}'=^1\gamma_1^2\gamma_2\mathcal{A}.\]Then \[\gamma(a_3,b,c_3)=f_3^{^3\gamma\mathcal{A}'}=f_3^{^1\gamma_1^2\gamma_2\mathcal{A}}=(A_3,b,C_3).\]By $q'_1r'_2+q'_2r'_1,p'_2s'_1+q'_2r'_1,q''_1r''_2+q''_2r''_1,p''_2s''_1+q''_2r''_1 \in R_b$ and lemma $\ref{1.14}$ we have $\gamma \in S_{(a_3,b,c_3)}$. So $(a_3,b,c_3) \approx (A_3,b,C_3)$.
\end{proof}

\begin{theorem}\label{1.16}
	Direct composition induces an operation on the set of classes of quadratic forms. More precisely, for all $[(a_1,b,c_1)],[(a_2,b,c_2)]$ in $C(b,\Delta)$, there exists exactly one $[(a_3,b,c_3)]$ in $C(b,\Delta)$ such that $(a_3,b,c_3)$ is a direct composition of $(a_1,b,c_1)$ and $(a_2,b,c_2)$, and $[(a_3,b,c_3)]$ is not depend on the choice of representatives of $[(a_1,b,c_1)]$ and $[(a_2,b,c_2)]$. Call $[(a_3,b,c_3)]$ the direct composition of $[(a_1,b,c_1)]$ and $[(a_2,b,c_2)]$, denoted by $[(a_1,b,c_1)][(a_2,b,x_2)]=[(a_3,b,c_3)]$. Furthermore, for all $[(a,b,c)],[(A,b,C)] \in C(b,\Delta)$, we have \[[(a,b,c)][(A,b,C)]=[(A,b,C)][(a,b,c)],\]\[[(1,b,\Delta)][(a,b,c)]=[(a,b,c)],\]\[[(a,b,c)][(c,b,a)]=[(1,b,ac)]=[(1,b,\Delta)].\]
	Call $f_0:=(1,b,\Delta)$ the principal form of invariant $b,\Delta$, and call $C_0:=[(1,b,\Delta)]$ the principal class.
\end{theorem}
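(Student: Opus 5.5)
Existence comes from Lemma \ref{1.11} and Lemma \ref{1.12}; uniqueness and independence of representatives come from Lemma \ref{1.15}; the three identities I will check by exhibiting explicit direct cubes.

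\textbf{Existence.} Given $[(A_1,b,C_1)],[(A_2,b,C_2)]\in C(b,\Delta)$, Lemma \ref{1.11} gives representatives $(a_1,b,c_1)$, $(a_2,b,c_2)$ with $(a_1,b)=1$ and $(a_2,ba_1)=1$. Since $a_1c_1\equiv a_2c_2\equiv\Delta$, we have $a_1c_1+a_2c_2=\varphi(k)$ for some $k$, and replacing $k$ by $k+b$ if needed we may take $k\in R_b$. Lemma \ref{1.12} then produces a primitive direct composition $(a_1a_2,b,c_3)$. Its product satisfies $a_1a_2c_3=a_1c_1+\varphi(a_1e_1k)\equiv\Delta \mod \varphi(R)$, so its class lies in $C(b,\Delta)$.

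\textbf{Uniqueness and independence.} Let $(a_3,b,c_3)$ and $(A_3,b,C_3)$ be direct compositions of any representatives of the two classes. Lemma \ref{1.15} gives $(a_3,b,c_3)\approx(A_3,b,C_3)$ directly. One point needs checking: Lemma \ref{1.15} uses Lemma \ref{1.13}, which requires the minors of $M(\mathcal A')$ to be coprime, and this needs the composition to be primitive. I expect this to follow because the minors include $a_1,c_1,b$ up to the $k$-terms. In any case, the theorem concerns classes of primitive forms, so I will restrict to primitive compositions, which Lemma \ref{1.12} supplies.

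\textbf{Commutativity.} I will transpose the cube so that the roles of $M_1,N_1$ and $M_2,N_2$ are exchanged. Concretely, swap the entries $q_i\leftrightarrow r_i$. This swaps $f_1$ and $f_2$ and fixes $f_3$, as one checks from (\ref{4}). I then verify that directness is preserved. After the swap the two conditions become $r_1q_2+r_2q_1\in R_b$, which is unchanged, and $p_2s_1+r_2q_1\in R_b$, which is not obviously the old condition $p_2s_1+q_2r_1\in R_b$. Their sum is $q_1r_2+q_2r_1\in R_b$, so since $R_b$ is a group the new second condition holds exactly when the old one does.

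A cleaner route uses the explicit cube of Lemma \ref{1.12}. With $(a_1,b)=(a_2,b)=1$ and $(a_1,a_2)=1$ this is symmetric, giving $(a_1a_2,b,\cdot)$ from either order. I would then check that the two third coefficients agree modulo the transformation $\begin{pmatrix}1&0\\ v&1\end{pmatrix}$, which should lie in the appropriate $S$. I expect this symmetry/directness check to be the main obstacle, and I would try the $q\leftrightarrow r$ swap first.

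\textbf{Identity and inverse.} For the identity I exhibit the cube with $p_1=1$, $p_2=0$, $q_1=0$, $q_2=1$, $r_1=k$, $r_2=a$, $s_1=c$, $s_2=k+b$, where $a\Delta$-type relations give $ac=\Delta+\varphi(k)$ with $k\in R_b$. Then (\ref{4}) gives $f_1=(1,b,\Delta)$ (via $k(k+b)+ac$), $f_2=(a,b,c)$ and $f_3=(a,b,c)$, and I will check that the two directness sums are $k$ and $0$, both in $R_b$. For the inverse I build the analogous cube with $f_1=(a,b,c)$, $f_2=(c,b,a)$ and $f_3=(1,b,ac)$, using $a\cdot c+c\cdot a=0=\varphi(0)$ so that $k=0$. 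A cube with entries from $1,0,a,c,b$ should work, and I will verify its directness in the same way. Finally, $(1,b,ac)\approx(1,b,\Delta)$ via $\begin{pmatrix}1&0\\ k&1\end{pmatrix}$, since $\varphi(k)=ac+\Delta$ and the Arf quantity $apr+bqr+cqs$ equals $k\in R_b$.
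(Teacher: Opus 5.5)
Your plan follows the paper's proof step for step. Existence comes from Lemmas~\ref{1.11} and~\ref{1.12}, uniqueness and independence of representatives from Lemma~\ref{1.15}, and commutativity from the $q_i\leftrightarrow r_i$ swap with exactly the paper's $R_b$ bookkeeping. The identity and inverse come from explicit direct cubes.

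\textbf{The gap in existence.} Lemma~\ref{1.12} only gives a direct composition of the \emph{special} representatives produced by Lemma~\ref{1.11}. The statement asserts that the \emph{given} representatives $(a_1,b,c_1),(a_2,b,c_2)$ admit a direct composition. Your commutativity step also silently assumes a direct cube exists for arbitrary $(a,b,c),(A,b,C)$. The paper closes this with the ``in particular'' clause of Lemma~\ref{1.14}:
\begin{itemize}
\item let $\mathcal{A}$ be the direct cube for the special representatives $(a_i',b,c_i')$;
\item let $\gamma_i\in S_{(a_i',b,c_i')}$ carry $(a_i',b,c_i')$ to $(a_i,b,c_i)$ (such $\gamma_i$ exist by symmetry of $\approx$, Lemma~\ref{1.3});
\item then ${}^2\gamma_2\,{}^1\gamma_1\mathcal{A}$ is again direct, has $f_1=(a_1,b,c_1)$ and $f_2=(a_2,b,c_2)$, and has the same $f_3$.
\end{itemize}
Without this step you have shown only that the product of classes is well defined via suitably chosen representatives. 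You have not shown the representative-level existence the theorem claims, and your commutativity argument needs that.

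\textbf{Smaller points.}
\begin{itemize}
\item For the inverse you only promise a cube, so you need to write one. The paper's is $p_1=b$, $q_1=a$, $p_2=1$, $q_2=0$, $r_1=c$, $s_1=0$, $r_2=0$, $s_2=1$. It gives $f_1=(a,b,c)$, $f_2=(c,b,a)$, $f_3=(1,b,ac)$, with $q_1r_2+q_2r_1=p_2s_1+q_2r_1=0\in R_b$.
\item Your identity cube is correct and slightly shorter than the paper's, which first yields $(1,b,ac)$ and then applies $\begin{pmatrix}1&0\\ k&1\end{pmatrix}$. However, both directness quantities equal $k$, not $k$ and $0$.
\item Your concern about Lemma~\ref{1.13} is unfounded. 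The second-order minors of $M(\mathcal{A}')$ are $a_1,a_2,c_1,c_2,k,k+b$. Any common divisor therefore divides $a_1$, $c_1$ and $k+(k+b)=b$, hence divides $\gcd(a_1,b,c_1)=1$. What is used is primitivity of the input form, not of the composition, so no restriction to primitive compositions is needed. In fact any composition of primitive forms is automatically primitive.
\end{itemize}
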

\begin{proof}
	From lemma $\ref{1.11}$ there exists primitive forms $(A_1,b,C_1) \approx (a_1,b,c_1)$ and \\ $(A_2,b,C_2) \approx (a_2,b,c_2)$ such that $(A_1,b)=1,(A_2,bA_1)=1$. From lemma $\ref{1.12}$ there exists $[(a_3,b,c_3)]\in C(b,\Delta)$ such that $(a_3,b,c_3)$ is a direct composition of $(A_1,b,C_1)$ and $(A_2,b,C_2)$, then $(a_3,b,c_3)$ is a direct composition of $(a_1,b,c_1)$ and $(a_2,b,c_2)$ by the particular case in lemma $\ref{1.14}$. On the other hand, if $(a'_3,b,c'_3)$ is another direct composition of $(a_1,b,c_1)$ and $(a_2,b,c_2)$, from lemma $\ref{1.15}$ we have $[(a_3,b,c_3)]=[(a'_3,b,c'_3)]$, and  $[(a_3,b,c_3)]$ is not depend on the choice of representatives of $[(a_1,b,c_1)]$ and $[(a_2,b,c_2)]$.
	
	Let \[\mathcal{A}=\begin{gathered}
		\xymatrix@R=0.8cm@C=0.8cm@H=0.4cm{&
			p_1\ar@{-}[rr]\ar@{-}[ld]\ar@{-}[dd]& &
			q_1\ar@{-}[ld]\ar@{-}[dd]\\
			p_2\ar@{-}[rr]\ar@{-}[dd]& &
			q_2\ar@{-}[dd]&\\&
			r_1\ar@{-}[rr]\ar@{-}[ld]& &
			s_1\ar@{-}[ld]\\
			r_2\ar@{-}[rr]& &
			s_2&}	\end{gathered},  \qquad
	\mathcal{A}'=\begin{gathered}
		\xymatrix@R=0.8cm@C=0.8cm@H=0.4cm{&
			p_1\ar@{-}[rr]\ar@{-}[ld]\ar@{-}[dd]& &
			r_1\ar@{-}[ld]\ar@{-}[dd]\\
			p_2\ar@{-}[rr]\ar@{-}[dd]& &
			r_2\ar@{-}[dd]&\\&
			q_1\ar@{-}[rr]\ar@{-}[ld]& &
			s_1\ar@{-}[ld]\\
			q_2\ar@{-}[rr]& &
			s_2&}	\end{gathered}.\]
	If $\mathcal{A}$ is direct then $\mathcal{A}'$ is direct by definition $\ref{1.8}$ and $p_2s_1+r_2q_1=(p_2s_1+q_2r_1)+(q_2r_1+q_1r_2)$, so from lemma $\ref{1.6}$ and \[f_1^{\mathcal{A}}=f_2^{\mathcal{A}'},\; f_2^{\mathcal{A}}=f_1^{\mathcal{A}'},\; f_3^{\mathcal{A}}=f_3^{\mathcal{A}'}\]we have \[[(a,b,c)][(A,b,C)]=[(A,b,C)][(a,b,c)].\]
	For all $[(a,b,c)] \in C(b,\Delta)$, let \[\mathcal{A}_1=\begin{gathered}
		\xymatrix@R=0.6cm@C=0.8cm@H=0.4cm{&
			1\ar@{-}[rr]\ar@{-}[ld]\ar@{-}[dd]& &
			0\ar@{-}[ld]\ar@{-}[dd]\\
			0\ar@{-}[rr]\ar@{-}[dd]& &
			1\ar@{-}[dd]&\\&
			0\ar@{-}[rr]\ar@{-}[ld]& &
			c\ar@{-}[ld]\\
			a\ar@{-}[rr]& &
			b&}	\end{gathered}  \qquad
	\mathcal{A}_2=\begin{gathered}
		\xymatrix@R=0.6cm@C=0.8cm@H=0.4cm{&
			b\ar@{-}[rr]\ar@{-}[ld]\ar@{-}[dd]& &
			a\ar@{-}[ld]\ar@{-}[dd]\\
			1\ar@{-}[rr]\ar@{-}[dd]& &
			0\ar@{-}[dd]&\\&
			c\ar@{-}[rr]\ar@{-}[ld]& &
			0\ar@{-}[ld]\\
			0\ar@{-}[rr]& &
			1&}	\end{gathered}.\]
	$\mathcal{A}_1,\mathcal{A}_2$ are direct. Then we have\[ [(1,b,ac)][(a,b,c)]= [f_1^{\mathcal{A}_1}][f_2^{\mathcal{A}_1}]=[f_3^{\mathcal{A}_1}]=[(a,b,c)], \] \[ [(a,b,c)][(c,b,a)]= [f_1^{\mathcal{A}_2}][f_2^{\mathcal{A}_2}]=[f_3^{\mathcal{A}_2}]=[(1,b,ac)].\]Finally, suppose $\Delta=ac +\varphi(k), k \in R_b$ and we have $[(1,b,ac)]=[(1,b,\Delta)]$ since \[\begin{pmatrix}
		1 & 0 \\
		k & 1
	\end{pmatrix}(1,b,ac)=(1,b,\Delta),k \in R_b.\qedhere\]
\end{proof}
If we can prove the direct composition on the set of classes of quadratic forms satisfies the associative law then direct composition will make $C(b,\Delta)$ an Abelian group. But it is complex to compute $[(a_1,b,c_1)
][(a_2,b,c_2)][(a_3,b,c_3)]$ directly, so the commutativity will be proved in next section by the correspondence between classes of quadratic forms and ideal class.

\subsection{Classes of Binary Quadratic Forms and Ideal Class} 
\qquad Let $K=Frac(R)$ be the field of fractions of $R$. Let $K (\eta)$ be a field extension of $K$, where the minimal polynomial of $\eta$ is $x^2+bx+\Delta$. We have $[K(\eta):K]=2$ since $ \Delta \not \equiv 0 \mod{\varphi(R)}$. Consider the fractional ideal of $R \oplus R\eta$.

\begin{lemma}\label{1.18}
	$R \oplus R \alpha$ is a fractional ideal of $R \oplus R\eta$ if and only if there exists $a,c \in R$, such that $\alpha$ is a root of $ax^2+bx+c$, and $ac \equiv \Delta \mod{\varphi(R)}$.
\end{lemma}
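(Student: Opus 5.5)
The natural plan is to translate the fractional-ideal condition into the single requirement that the lattice $M=R\oplus R\alpha$ be stable under multiplication by $\eta$. The intended reading of the statement is that $\alpha\in K(\eta)\setminus K$, so that $M$ is a rank-two $R$-lattice in $K(\eta)$. Since $R\oplus R\eta$ is generated as a ring by $R$ and $\eta$, and $M$ is already a finitely generated $R$-submodule of $K(\eta)$, $M$ is a fractional ideal exactly when $\eta M\subseteq M$. This reduces to checking $\eta\in M$ and $\eta\alpha\in M$.

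For the forward direction, assume $\eta M\subseteq M$.
\begin{enumerate}
\item From $\eta\in M$, write $\eta=u+v\alpha$ with $u,v\in R$ and $v\neq0$. Hence $\alpha=(\eta+u)/v$.
\item Since the minimal polynomial of $\eta$ is $x^2+bx+\Delta$, substituting $\eta=v\alpha+u$ shows that $\alpha$ is a root of
\[v^2x^2+bvx+(u^2+bu+\Delta)=v\bigl(vx^2+bx+(u^2+bu+\Delta)/v\bigr).\]
\item Next impose $\eta\alpha\in M$. Compute $\eta\alpha=(v\alpha+u)\alpha=v\alpha^2+u\alpha$. Using the quadratic relation,
\[v\alpha^2=b\alpha+(u^2+bu+\Delta)/v .\]
\item So $\eta\alpha\in R\oplus R\alpha$ holds iff $c:=(\Delta+\varphi(u))/v\in R$, because $1,\alpha$ are $K$-linearly independent. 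Then $\alpha$ is a root of $vx^2+bx+c$ with $a:=v$, and $ac=\Delta+\varphi(u)\equiv\Delta\pmod{\varphi(R)}$.
\end{enumerate}

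For the converse, assume $\alpha$ is a root of $ax^2+bx+c$ with $ac=\Delta+\varphi(k)$. First, $a\neq0$: otherwise $\alpha=c/b\in K$, which is degenerate. Then set $\eta':=a\alpha+k$. A direct computation gives
\[\eta'^2+b\eta'+\Delta=a(a\alpha^2+b\alpha)+k^2+bk+\Delta=a(-c)+ac=0\]
(signs are irrelevant in characteristic 2). Hence $\eta'$ is a root of $x^2+bx+\Delta$, so $\eta'=\eta$ or $\eta'=\eta+b$; in either case $\eta\in R+R\alpha$. Then $\eta\alpha=a\alpha^2+(k\text{ or }k+b)\alpha=c+(\text{element of }R)\alpha\in M$, which again follows from $a\alpha^2=b\alpha+c$ in characteristic 2.

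The main obstacle is bookkeeping rather than substance. One has to identify $\eta'$ with $\eta$ or its conjugate $\eta+b$, which requires fixing the embedding: both roots live in $K(\eta)$, and the conjugate differs from $\eta$ by $b\in R$, so both cases land in $M$. One also has to treat the degenerate cases $v=0$ and $a=0$, which I would dispose of by the independence of $1,\alpha$ over $K$, implicitly assumed for $M$ to be a lattice.
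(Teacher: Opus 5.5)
Your proof is correct and follows essentially the same route as the paper: both reduce the fractional-ideal condition to $\eta\cdot 1\in M$ and $\eta\alpha\in M$, and both read off $\alpha=(\eta+u)/v$ with $v\mid \Delta+\varphi(u)$. The differences are minor. You get the shape of $\alpha$ directly from $\eta\in M$, whereas the paper normalizes $\alpha=(p\eta+q)/r$ with $p,q,r$ coprime and then forces $p=1$. You also actually verify the converse, which the paper only asserts; there, $a\neq 0$ follows at once from the standing hypothesis $\Delta\not\equiv 0 \pmod{\varphi(R)}$.
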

\begin{proof}
	If $a,c \in R$, $ac=\Delta +k^2+bk$, then the roots of $ax^2+bx+c$ are $\frac{\eta +k}{a}$ and $\frac{\eta+k+b}{a}$. Then $R\oplus R(\frac{\eta+k+b}{a})$ and $R \oplus R(\frac{\eta+k}{a})$ are fractional ideals of $R \oplus R\eta$.
	
	If $\alpha \in K(\eta),R \oplus R \alpha$ is a fractional ideal of $R \oplus R\eta$, suppose $\alpha = \frac{p\eta +q}{r}$, $p,q,r$ are coprime and $p$ is monic. Since $R \oplus R \alpha$ is a fractional ideal we have \[\eta \alpha = \frac{p(b\eta +\Delta)+q\eta}{r}=\frac{q^2+pqb+p^2\Delta}{pr}+\frac{pb+q}{p} \cdot \frac{p\eta +q}{r} \in R \oplus R\frac{p\eta +q}{r}.\]
	Then we have $p \mid q, pr \mid q^2+pqb+p^2\Delta$.
	Similarly, since $1 \in R$ we have \[ 1 \cdot \eta=\frac{q}{p}+\frac{r}{p}\frac{p\eta +q}{r} \in R \oplus R\frac{p\eta +q}{r},\]so $p \mid r,p \mid q$. Then $p=1$ since $p,q,r$ are coprime, and $r \mid q^2+qb+\Delta$. Then $\alpha = \frac{\eta+q}{r}$ is a root of  $rx^2+bx+\frac{\Delta +qb+q^2}{r}$.
\end{proof}

In particular, if $R\alpha \oplus R\beta$ is a fractional ideal of $R \oplus R\eta$, then $\frac{\alpha}{\beta}=\frac{\eta+k}{a},k,a \in R$, and $a \mid \Delta +k^2+bk$.

\begin{lemma}\label{new1.18}
	Suppose $R \oplus R\frac{\eta + k}{a}$ is a fractional ideal of $R \oplus R\eta$. If $R \oplus R\frac{\eta + k}{a}$ is invertible, then $(a,b,\frac{\Delta+k^2+bk}{a})$ is a primitive form.
\end{lemma}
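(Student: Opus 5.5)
Set $c=\frac{\Delta+k^2+bk}{a}$ and $I=R\oplus R\alpha$ with $\alpha=\frac{\eta+k}{a}$. By Lemma \ref{1.18} (or directly, since $a\mid\Delta+k^2+bk$), $\alpha$ is a root of $ax^2+bx+c$, so $c\in R$ and $(a,b,c)$ is a quadratic form with $ac\equiv\Delta \mod{\varphi(R)}$. The only thing to prove is primitivity. I will argue by contrapositive: assume a prime $d\in R$ divides $a$, $b$ and $c$, and show $I$ is not invertible.

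\textbf{Step 1: enlarge the order.} Let $\mathcal{O}(I)=\{x\in K(\eta)\mid xI\subseteq I\}$ be the multiplier ring of $I$. Under $d\mid a,b,c$, the element $\beta=a\alpha/d=\frac{\eta+k}{d}$ satisfies
\[\beta^2=\frac{\eta^2+k^2}{d^2}=\frac{b\eta+\Delta+k^2}{d^2}=\frac{b}{d}\,\beta+\frac{\Delta+k^2+bk}{d^2}=\frac{b}{d}\,\beta+\frac{a}{d}\cdot\frac{c}{d},\]
so $\beta$ is integral over $R$ and $\beta\notin R\oplus R\eta$ (its $\eta$-coefficient is $1/d$). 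Also
\[\beta\cdot 1=\frac{a}{d}\alpha\in I,\qquad \beta\alpha=\frac{a}{d}\alpha^2=\frac{a}{d}\cdot\frac{b\alpha+c}{a}=\frac{b}{d}\alpha+\frac{c}{d}\in I,\]
using $a\alpha^2=b\alpha+c$ in characteristic $2$. Hence $\beta\in\mathcal{O}(I)$, and $\mathcal{O}(I)\supsetneq R\oplus R\eta$.

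\textbf{Step 2: invertible ideals have multiplier ring equal to the order.} If $IJ=R\oplus R\eta$ for some fractional ideal $J$, then for $x\in\mathcal{O}(I)$ we get
\[x(R\oplus R\eta)=xIJ\subseteq IJ=R\oplus R\eta,\]
so $x\in R\oplus R\eta$. Thus $\mathcal{O}(I)=R\oplus R\eta$, which contradicts Step 1. Therefore $(a,b,c)$ is primitive.

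The main obstacle is Step 1: finding the right element $\beta$ and checking its membership in $\mathcal{O}(I)$ without sign issues. In characteristic $2$ the relation $a\alpha^2+b\alpha+c=0$ becomes $a\alpha^2=b\alpha+c$, which makes the computation clean. Step 2 is the standard argument, and the paper's notion of invertibility is the usual one for fractional ideals of $R\oplus R\eta$, so no extra input is needed.
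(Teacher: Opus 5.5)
Your proof is correct, but it takes a different route from the paper.

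The paper works with the colon ideal $I^{-1}=\{x\in K(\eta)\mid xI\subseteq R\oplus R\eta\}$ directly. With $\eta'=\eta+k$ and $\Delta'=\Delta+k^2+bk$, it writes an arbitrary element of $I^{-1}$ as $\frac{p\eta'+q}{r}$ and shows $r=1$. It then extracts $a\mid p\Delta'$ and $a\mid pb+q$, and concludes that $II^{-1}\subseteq \tilde pR\oplus R\eta'\subsetneq R\oplus R\eta$.

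You never compute $I^{-1}$. Instead you exhibit $\beta=\frac{\eta+k}{d}\notin R\oplus R\eta$ with $\beta I\subseteq I$, and invoke the general fact that an invertible ideal has multiplier ring equal to the base order.

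What each buys:
\begin{itemize}
\item Your version is shorter and skips the normalization of elements of $I^{-1}$. It also names the real obstruction: a non-primitive form gives an ideal that is stable under the strictly larger order $R\oplus R\beta$.
\item The paper's version is fully elementary and produces an explicit proper submodule containing $II^{-1}$.
\end{itemize}

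The two arguments describe the same phenomenon. The paper's module $\tilde pR\oplus R\eta'$ (with $\tilde p=d$) is exactly $d(R\oplus R\beta)$, which is the conductor of $R\oplus R\beta$ into $R\oplus R\eta$. Your observation explains why $II^{-1}$ lands there: it is an $R\oplus R\beta$-submodule of $R\oplus R\eta$.

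Your computation of $\beta^2$ is not actually needed. The argument uses only $\beta I\subseteq I$ and $\beta\notin R\oplus R\eta$.
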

\begin{proof}
	We claim that if there exists a prime $\tilde{p} \in R$ such that $\tilde{p}$ is a common divisor of $a,b,\frac{\Delta+k^2+bk}{a}$, then $R \oplus R\frac{\eta + k}{a}$ is non-invertible.
	Let\[\eta'=\eta +k, \Delta'=\Delta+k^2+bk.\]
	Suppose \[\alpha = \frac{p\eta'+q}{r} \in (R\oplus R\frac{\eta+k}{a})^{-1}=\{x \in K(\eta) | x(R\oplus R\frac{\eta+k}{a}) \subset R \oplus R\eta \},\]where $p,q,r \in R$, $p,q,r$ are coprime and $r$ is monic. 
	Then $\alpha \cdot 1 \in R\oplus R\eta = R \oplus R \eta'$ implies $r=1$, and \begin{equation}\notag
		\alpha\frac{\eta+k}{a}=\frac{p(b\eta'+\Delta')+q\eta'}{a}
		=\frac{p\Delta'}{a}+\frac{pb+q}{a}\eta'
		\in R\oplus R \eta=R \oplus R\eta'
	\end{equation}
	implies\[a\mid p\Delta', \: a \mid pb+q.\]
	Then $\tilde{p} \mid q$ since $\tilde{p} \mid a, \tilde{p} \mid b$. Also, we have $\tilde{p} \mid \frac{p\Delta'}{a}$. Then \[\alpha(R \oplus R\frac{\eta + k}{a}) \subset \tilde{p}R \oplus R\eta' \subsetneq R\oplus R\eta,\] so $R \oplus R\frac{\eta + k}{a}$ is non-invertible.
\end{proof}

\begin{definition}\label{1.17}
	Suppose $M,M'$ are fractional ideal of $R \oplus R\eta$. Call $M$ and $M'$ are equivalent, if there exists $\alpha \in K(\eta)$ such that $M=\alpha M'$. Denote the equivalence class of $M$ by $[M]$. Let \[C_2(b,\Delta):=\{[R \oplus R\frac{\eta + k}{a}]\;|\; a\mid \Delta+k^2+bk, \text{ and } a,b,\frac{\Delta+k^2+bk}{a} \text{ are coprime}\},\] and \[\text{Pic}(R \oplus R\eta):= \{[M]\;|\; M \text{is a invertible fractional ideal of }R \oplus R\eta \}.\] From lemma $\ref{1.18}$ and lemma $\ref{new1.18}$ we have $\text{Pic}(R \oplus R \eta) \subset C_2(b,\Delta)$, and we will show that $\text{Pic}(R \oplus R \eta) = C_2(b,\Delta)$.
\end{definition}

In $\mathbf{Z}$ we have the the group of classes of quadratic forms with discriminant $D$ over $\mathbf{Z}$ whose multiplication is given by Gauss's direct composition and the narrow ideal class group of \linebreak $\mathbf{Z} \oplus \mathbf{Z} \frac{D+\sqrt{D}}{2}$ are isomorphic. We have similar result in $R=\mathbb{F}_{2^n}[T]$:

\begin{theorem}\label{1.19}
	Suppose $[R \alpha \oplus R \beta] \in C_2(b,\Delta)$,  $\frac{\alpha}{\beta}=\frac{\eta+k}{a},a,k \in R, a \mid \Delta +k^2+bk$. Then $\frac{\beta}{\alpha}=\frac{a}{\eta+k}=\frac{\eta +k+b}{(\frac{\Delta+k^2+bk}{a})}, \frac{\Delta+k^2+bk}{a} \in R$, and there is exactly one of $k,k+b \in R_b$. Let \begin{equation}\notag
		\Pi([R \alpha \oplus R \beta]):=\begin{cases}
			[(a,b,\frac{\Delta+k^2+bk}{a})],&\text{if } k \in R_b ; \\
			[(\frac{\Delta+k^2+bk}{a},b,a)],&\text{if } k+b \in R_b.
		\end{cases}
	\end{equation}
	Then $\Pi$ is a well-defined bijection from $C_2(b,\Delta)$ to $C(b,\Delta)$. Moreover,  \[C_2(b,\Delta)=\text{Pic}(R \oplus R \eta),\] and \[\Pi([M_1M_2])=\Pi([M_1])\Pi([M_2]), \forall [M_1],[M_2] \in C_2(b,\Delta),\] in the right-hand side the multiplication is direct composition defined in theorem $\ref{1.16}$.
\end{theorem}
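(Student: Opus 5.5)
First I would show that $\Pi$ is well defined. Fix a class $[R\alpha\oplus R\beta]\in C_2(b,\Delta)$. Its representatives can differ in two ways: the basis can change by an element of $GL_2(R)$, and the ideal can be rescaled by $\lambda\in K(\eta)^*$. Rescaling leaves $\alpha/\beta$ unchanged. So the real point is a change of basis $(\alpha,\beta)\mapsto(p\alpha+q\beta,\,r\alpha+s\beta)$. I would normalize $\beta=1$ and $\alpha=\tau=\frac{\eta+k}{a}$, a root of $ax^2+bx+c$ with $c=\frac{\Delta+k^2+bk}{a}$. The new ratio is then $\tau'=\frac{p\tau+q}{r\tau+s}$, a root of the transformed form $\gamma(a,b,c)$ for a suitable $\gamma$; the determinant is $1$ because $-1=1$ in characteristic $2$. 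Writing $\tau'=\frac{\eta+k'}{a'}$, I would compute $k'$ in terms of $k,p,q,r,s$. The expected identity is $k'=k+(apr+bqr+cqs)$, or the analogous one with the roles of the two roots swapped, in the same way that (\ref{Arf}) arises.

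Next I would check that the case split in the definition of $\Pi$, according to whether $k$ or $k+b$ lies in $R_b$, exactly compensates for that swap. If $k$ and $k'$ are both in $R_b$, then $apr+bqr+cqs=k+k'$ lies in $R_b$, so $\gamma\in S_{(a,b,c)}$. In the mixed case the roles of $a$ and $c$ are exchanged, and I would use the matrix $\begin{pmatrix}0&1\\1&0\end{pmatrix}$ together with Lemma \ref{1.3} to reduce to the first case. Together with Lemma \ref{new1.18} and the coprimality built into the definition of $C_2$, this shows that $\Pi$ lands in $C(b,\Delta)$.

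For bijectivity I would define an inverse map $[(a,b,c)]\mapsto[R\oplus R\frac{\eta+k}{a}]$, where $ac=\Delta+\varphi(k)$ and $k\in R_b$; Lemma \ref{1.18} says this is a fractional ideal. Proper equivalence of forms, via $\gamma$, gives a change of basis, so this map is well defined. One then verifies that the two maps are mutually inverse.

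To prove $C_2(b,\Delta)=\mathrm{Pic}(R\oplus R\eta)$, I would show that every module $M=R\oplus R\frac{\eta+k}{a}$ arising from a primitive form is invertible. Let $\bar M$ be its conjugate under $\eta\mapsto\eta+b$. The claim is that $M\bar M=\frac1a(R\oplus R\eta)$. The product is generated by $1$, $\frac{\eta+k}{a}$, $\frac{\eta+k+b}{a}$ and $\frac{c}{a}$; multiplied by $a$ these become $a$, $\eta+k$, $\eta+k+b$ and $c$, which generate the ideal $(a,b,c,\eta+k)=(1,\eta)$ because the form is primitive.

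For multiplicativity I would use Lemma \ref{1.11} to choose representatives $(a_1,b,c_1)$ and $(a_2,b,c_2)$ with $(a_1,b)=1$ and $(a_2,a_1b)=1$, and a common $k$ adjusted as in Lemma \ref{1.12}. I would then compute the product $M_1M_2$ directly. I expect it to equal $R\oplus R\frac{\eta+k'}{a_1a_2}$, where $k'$ is congruent to $k$ modulo both $a_1$ and $a_2$ (CRT), with $k'=k+a_1e_1k$ coming from the cube in Lemma \ref{1.12}. This should match the direct composition $(a_1a_2,b,e_1e_2k^2+c_1e_2+c_2e_1)$, whose identity $a_1a_2c_3=a_1c_1+\varphi(a_1e_1k)$ is given in that lemma.

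The main obstacle is to keep track of the $R_b$ condition. It has to be checked that the $k$ attached to the product ideal lies on the correct side of $R_b$, which is why the lemma insists on choosing $e_1$ with $a_1e_1k\in R_b$. If that lemma's construction is normalized compatibly, the new $k$ lies in $R_b$ and $\Pi([M_1M_2])$ agrees with the direct composition. Associativity then follows from associativity of ideal multiplication.
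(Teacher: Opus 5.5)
Your proof is correct in substance. For well-definedness, bijectivity and multiplicativity it is the paper's proof:
\begin{itemize}
\item your identity $k'=k+(apr+bqr+cqs)$ (for the appropriate $\gamma$) is equation~(\ref{13});
\item your inverse map packages the paper's injectivity and surjectivity steps;
\item the product is computed on the representatives from Lemmas~\ref{1.11} and~\ref{1.12}.
\end{itemize}
The paper avoids your case split via $\sigma=\begin{pmatrix}0&1\\1&0\end{pmatrix}$ and Lemma~\ref{1.3}. It always orders the basis so that the parameter lies in $R_b$, using $[R\alpha\oplus R\beta]=[R\oplus R\frac{\beta}{\alpha}]$. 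Your version works, but the case where neither $k$ nor $k'$ lies in $R_b$ also needs a swap, namely $\sigma\gamma\sigma\in S_{(c,b,a)}$.

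The genuinely different step is $C_2(b,\Delta)\subseteq\mathrm{Pic}(R\oplus R\eta)$. The paper obtains invertibility only at the end, from multiplicativity of $\Pi$ together with $[(a,b,c)][(c,b,a)]=[(1,b,\Delta)]$ from Theorem~\ref{1.16}. Your identity $M\bar M=\frac1a(R\oplus R\eta)$ is instead a direct check, independent of the composition law. It also exhibits the inverse $a\bar M$, whose $\Pi$-image is $[(c,b,a)]$, matching the paper. Your route thus decouples the Picard-group statement from composition, while the paper's reuses machinery it has already built. In both routes the reverse inclusion comes from Lemmas~\ref{1.18} and~\ref{new1.18}, which you should cite explicitly.

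Two slips need fixing.

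\textbf{Determinant.} Determinants of $GL_2(R)$ lie in $R^*=\mathbb{F}_{2^n}^*$, not in $\{\pm1\}$. So ``$-1=1$'' does not give determinant $1$ when $n\ge 2$. Instead, divide the matrix by a square root of its determinant, which exists because $\mathbb{F}_{2^n}$ is perfect; this leaves $\tau'$ unchanged.

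\textbf{Multiplicativity.} Here you conflate two different $k$'s. The forms supplied by Lemma~\ref{1.11} are not united, so there is no common $k$. Write $a_1c_1=\Delta+\varphi(k_1)$ with $k_1\in R_b$, and let $k\in R_b$ be the element of Lemma~\ref{1.12}, so $a_1c_1+a_2c_2=\varphi(k)$. Then $M_1=R\oplus R\frac{\eta+k_1}{a_1}$ and $M_2=R\oplus R\frac{\eta+k_1+k}{a_2}$, with $k_1+k\in R_b$. The product parameter is $k_3=k_1+a_1e_1k$, not $k+a_1e_1k$.

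Since $a_1e_1=1+a_2e_2$, one gets $k_3\equiv k_1\pmod{a_1}$ and $k_3\equiv k_1+k\pmod{a_2}$. This is the correct version of your CRT description, and with $(a_1,a_2)=1$ it gives $M_1M_2=R\oplus R\frac{\eta+k_3}{a_1a_2}$.

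No extra normalization is needed. $k_3\in R_b$ simply because $R_b$ is a subgroup containing $k_1$ and $a_1e_1k$. Additivity of $\varphi$ in characteristic $2$ turns the identity of Lemma~\ref{1.12} into $a_1a_2c_3=\Delta+\varphi(k_3)$. Hence $\Pi([M_1M_2])=[(a_1a_2,b,c_3)]$.
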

\begin{proof}
	Suppose $[R \alpha_1 \oplus R \beta_1]=[R \alpha_2 \oplus R \beta_2]$, and $\frac{\alpha_i}{\beta_i}=\frac{\eta+k_i}{a_i}$, $k_i \in R_b$, $i=1,2$. Let \linebreak $c_i:=\frac{\Delta+k_i^2+bk_i}{a_i}$, $i=1,2$. There exists $\gamma=\begin{pmatrix}
		p&q\\r&s
	\end{pmatrix}  \in Sl_2(R)$ and $ \xi \in R$ such that \linebreak $ \xi \begin{pmatrix}
		\beta_1 \\ \alpha_1 
	\end{pmatrix}=\gamma\begin{pmatrix}
		\beta_2 \\ \alpha_2
	\end{pmatrix}$. Then \begin{equation}\notag
		\frac{\alpha_1}{\beta_1}=\frac{r\beta_2+s\alpha_2}{p\beta_2+q\alpha_2}
		=\frac{s\frac{\eta +k_2}{a_2}+r}{q\frac{\eta +k_2}{a_2}+p},
	\end{equation}\begin{equation}\label{13}
		\begin{aligned}
			\frac{s\frac{\eta +k_2}{a_2}+r}{q\frac{\eta +k_2}{a_2}+p}
			&=\frac{(s\eta+sk_2+a_2r)(q\eta +qk_2+a_2p+bq)}{(q\eta +qk_2+a_2p)(q\eta +qk_2+a_2p+bq)}\\
			&=\frac{a_2(qr+ps)\eta+a_2(ps+qr)k_2+a_2^2pr+a_2bqr+qs(\Delta+k_2^2+bk_2)}{a_2^2p^2+a_2bpq+q^2(\Delta+k_2^2+bk_2)}\\
			&=\frac{\eta+k_2+a_2pr+bqr+c_2qs}{a_2p^2+bpq+c_2q^2}.
		\end{aligned}
	\end{equation}
	Notice that equation $(\ref{13})$ only need $ps+qr=1,a_2c_2=\Delta+k_2^2+bk_2$ and $\eta^2+b\eta+\Delta=0$. Then $k_1=k_2+a_2pr+bqr+c_2qs,a_1=a_2p^2+bpq+c_2q^2,c_1=\frac{\Delta+\varphi(k_1)}{a_1}=\frac{a_2c_2+\varphi(a_2pr+bqr+c_2ps)}{a_1}$. That is, \[ (a_1,b,c_1)=(a_2p^2+bpq+c_2q^2,b,\frac{a_2c_2+\varphi(a_2pr+bqr+c_2ps)}{a_1})=\gamma(a_2,b,c_2),\]and \[ a_2pr+bqr+c_2qs=k_1+k_2 \in R_b. \] So $(a_2,b,c_2) \approx (a_1,b,c_1)$, $\Pi$ is well-defined. Moreover, for all $[M] \in C_2(b,\Delta)$ there exists $k,a \in R$ such that $[M]=[R \oplus R \frac{\eta+k}{a}]$, $a \mid \Delta+k^2+bk$ and $a,b,\frac{\Delta+k^2+bk}{a}$ are coprime, that is, $(a,b,\frac{\Delta+k^2+bk}{a})$ is a primitive form. Then $\Pi([M])=\Pi([R \oplus R\frac{\eta+k}{a}]) \in C_(b,\Delta)$.
	
	Since $\Pi$ is well-defined and $[R \alpha \oplus R\beta]=[R\oplus R \frac{\alpha}{\beta}]=[R\oplus R\frac{\beta}{\alpha}]$, we only consider the ideal class of the form $[R \oplus R\frac{\eta+k}{a}],k,a \in R, k\in R_b$. If $\Pi([R\oplus R \frac{\eta+k_1}{a_1}])=\Pi([R\oplus R \frac{\eta+k_2}{a_2}])$, let \[c_i:=\frac{\Delta+k_i^2+bk_i}{a_i},i=1,2,\] then there exists $\gamma=\begin{pmatrix}
		p&q\\r&s
	\end{pmatrix}\in S_{(a_2,b,c_2)} $such that $(a_1,b,c_1)=\gamma(a_2,b,c_2)$, so \[a_1=a_2p^2+bpq+c_2q^2,\; a_1c_1+a_2c_2=\varphi(a_2pr+bqr+c_2qs).\]
	From \[a_1c_1+a_2c_2=(\Delta+\varphi(k_1)+(\Delta+\varphi(k_2))=\varphi(k_1+k_2)\] and $k_1,k_2 \in R_b, a_2pr+bqr+c_2qs \in R_b$ we have $k_1+k_2=a_2pr+bqr+c_2qs$. From equation $(\ref{13})$ we find \[\frac{s\frac{\eta +k_2}{a_2}+r}{q\frac{\eta +k_2}{a_2}+p}=\frac{\eta+k_1}{a_1},\]
	then from $\gamma \in Sl_2(R)$ we have \[ [R \oplus R \frac{\eta+k_1}{a_1}]=[R \oplus R\frac{s\frac{\eta +k_2}{a_2}+r}{q\frac{\eta +k_2}{a_2}+p}]=[R(q\frac{\eta +k_2}{a_2}+p) \oplus R(s\frac{\eta +k_2}{a_2}+r)]=[R \oplus R \frac{\eta +k_2}{a_2}].\] Thus $\Pi$ is injective.
	
	Suppose $[(a,b,c)] \in C(b,\Delta),ac=\Delta+\varphi(k),k\in R_b$. Then $\frac{\eta+k}{a}$ is a root of $ax^2+bx+c$. \linebreak From lemma $\ref{1.18}$ and $(a,b,c)$ is a primitive form we have $[R \oplus R \frac{\eta+k}{a}] \in C_2(b,\Delta)$, and \linebreak $\Pi([R\oplus R\frac{\eta+k}{a}])=([a,b,c])$. So $\Pi$ is surjective.
	
	Suppose $[(A_1,b,C_1)],[(A_2,b,C_2)] \in C(b,\Delta)$. From lemma $\ref{1.11}$ there exists \\ $(a_1,b,c_1) \approx (A_1,b,C_1), (a_2,b,c_2) \approx (A_2,b,C_2)$, such that $(a_1,b)=(a_2,ba_1)=1$. Suppose \[a_1c_1=\Delta + \varphi(k'),\; a_2c_2+a_1c_2=\varphi(k),\; k',k \in R_b.\]From lemma $\ref{1.12}$ there exists $e_1,e_2 \in R$ such that $a_1e_1+a_2e_2=1, e_1a_1k \in R_b$. Let $\eta'=\eta+k'$, then we have \[[(a_1,b,c_1)][(a_2,b,c_2)]=[(a_1a_2,b,e_1e_2k^2+c_1e_2+c_2e_1)],\]\[a_1a_2(e_1e_2k^2+c_1e_2+c_2e_1)=a_1c_1+\varphi(a_1e_1k).\] From the proof of surjectivity above we have\[\Pi^{-1}([(a_1,b,c_1)])=[R \oplus R \frac{\eta'}{a_1}],\; \Pi^{-1}([(a_2,b,c_2)])=[R \oplus R \frac{\eta'+k}{a_2}],\]\[\Pi^{-1}[(a_1a_2,b,e_1e_2k^2+c_1e_2+c_2e_1)]=[R\oplus R\frac{\eta'+a_1e_1k}{a_1a_2}] \in C_2(b,\Delta).\]
	On the other hand, we have \[e_2\frac{\eta'}{a_1}+e_1\frac{\eta'+k}{a_2}=\frac{\eta'+a_1e_1k}{a_1a_2},\]\[a_2\frac{\eta'+a_1e_1k}{a_1a_2}+e_1k=\frac{\eta'}{a_1},\; a_1\frac{\eta'+a_1e_1k}{a_1a_2}+e_2k=\frac{\eta'+k}{a_2},\]\[(k+b)\frac{\eta'+a_1e_1k}{a_1a_2}+c_1c_2+e_1c_2=\frac{\eta'}{a_1}\frac{\eta'+k}{a_2},\]so \[(R \oplus R \frac{\eta'}{a_1})(R \oplus R \frac{\eta'+k}{a_2})=R\oplus R\frac{\eta'+a_1e_1k}{a_1a_2}.\]
	From the arbitrariness of $[(A_1,b,C_1)],[(A_2,b,C_2)]$ and $\Pi$ is bijective we have $C_2(b,\Delta)$ is closed under multiplication and $\Pi([M_1M_2])=\Pi([M_1])\Pi([M_2]), \forall [M_1],[M_2] \in C_2(b,\Delta)$. Moreover, from theorem 1.16, for all $[M] \in C_2(b,\Delta)$, suppose $\Pi([M])=[(a,b,c)]$, then we have $[M]$ is invertible since\[[M][\Pi^{-1}([(c,b,a)])]=\Pi^{-1}([(a,b,c)][(c,b,a)])=\Pi^{-1}([(1,b,\Delta)])=[R \oplus R\eta].\] Then $C_2(b,\Delta) \subset \text{Pic}(R\oplus R\eta)$. From lemma $\ref{1.18}$ and lemma $\ref{new1.18}$ we have \[\text{Pic}(R \oplus R \eta) = C_2(b,\Delta).\qedhere\]
\end{proof}

\begin{theorem}\label{1.22}
	$C(b,\Delta)$ is a finite Abelian group under direct composition. 
\end{theorem}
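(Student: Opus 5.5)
The plan is to transport the group structure from $\text{Pic}(R\oplus R\eta)$ to $C(b,\Delta)$ through the bijection $\Pi$ of Theorem \ref{1.19}, and then to prove finiteness separately by a reduction argument on degrees.

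For the group axioms, I first note that $\text{Pic}(R\oplus R\eta)$ is an Abelian group under ideal multiplication. Multiplication of fractional ideals is associative and commutative, the class $[R\oplus R\eta]$ is the identity, and inverses exist by definition of invertibility. The equivalence by scalars $\alpha\in K(\eta)^*$ is compatible with products, so the operation descends to classes. Theorem \ref{1.19} gives $C_2(b,\Delta)=\text{Pic}(R\oplus R\eta)$ and a bijection $\Pi:C_2(b,\Delta)\to C(b,\Delta)$ with $\Pi([M_1M_2])=\Pi([M_1])\Pi([M_2])$. So for classes $x,y,z\in C(b,\Delta)$ I would write $x=\Pi(X)$, $y=\Pi(Y)$, $z=\Pi(Z)$ and compute
\[(xy)z=\Pi(XY)\Pi(Z)=\Pi((XY)Z)=\Pi(X(YZ))=x(yz).\]
This gives associativity, which is the one axiom Theorem \ref{1.16} did not supply. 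Identity, inverses $[(c,b,a)]$ and commutativity already come from Theorem \ref{1.16}, or alternatively they also transfer through $\Pi$. Hence $C(b,\Delta)$ is an Abelian group and $\Pi$ is an isomorphism.

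For finiteness, I would show that every class in $C(b,\Delta)$ contains a form $(a,b,c)$ with $\deg a$ bounded by a constant depending only on $b$ and $\Delta$. The argument is a Lagrange-type reduction.
\begin{itemize}
\item Starting from $(a,b,c)$, the matrix $\begin{pmatrix}1&0\\ t&1\end{pmatrix}$ gives $(a,b,c+bt+at^2)$. Choosing $t$ by division, I can arrange $\deg(c)$ to be small relative to $a$.
\item The swap matrix $\begin{pmatrix}0&1\\1&0\end{pmatrix}$ (determinant $1$ in characteristic $2$) exchanges $a$ and $c$.
\item Alternating these two moves strictly decreases $\deg a$ until $\deg a\le\deg c$ and $\deg(\text{middle adjustment})$ is controlled.
\end{itemize}

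Properness is the main obstacle, because each move must lie in $S_{(a,b,c)}$ or be paired with another move outside it. By Lemma \ref{1.3}, a product of two non-proper steps is proper. Two options handle this. The first is to compose with the matrix from Lemma \ref{1.10}, replacing $r$ by $r+vp$ to force the invariant into $R_b$; this changes the second coefficient but not the first. The second option avoids the issue: the full equivalence classes (under all of $Sl_2(R)$) are finite in number, and each such class splits into at most two proper classes. I would check the second claim through Lemma \ref{1.3}(2): the set of $\gamma\notin S$ acts as a single coset.

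Once the forms are reduced, I would bound $\deg a$ using $ac=\Delta+k^2+bk$ with $k$ reduced modulo $a$ (via the $t$-move, $k\mapsto k+at$). With $\deg k<\deg a\le\deg c$, a degree comparison should give $2\deg a\le\max(\deg\Delta,2\deg a-1+\dots,\deg b+\deg a)$. This forces $\deg a\le\max(\deg b,\deg\Delta/2)$ roughly.

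Since $\mathbb{F}_{2^n}$ is finite, there are only finitely many $a$ and $k$ of bounded degree. The form is then determined by $c=(\Delta+\varphi(k))/a$. So only finitely many reduced forms occur, and $C(b,\Delta)$ is finite.
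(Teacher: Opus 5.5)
Your associativity argument is the same as the paper's: transport along $\Pi$ from $\mathrm{Pic}(R\oplus R\eta)=C_2(b,\Delta)$. For finiteness you take a genuinely different route, and your second option makes it work.

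\textbf{How the two routes differ.} The paper never leaves the proper class. It pairs the shear $\gamma_1=\begin{pmatrix}1&0\\ t&1\end{pmatrix}$ with $\gamma_2=\begin{pmatrix}t&1\\ 1&0\end{pmatrix}$. Their invariants are $at$ and $at+b$, so exactly one of them lies in $S_{(a,b,c)}$. Taking $t=uT^m$, it kills the leading coefficient of the larger of $a,c$ by a square, using that $\mathbb{F}_{2^n}$ is perfect. It then descends on $\deg(ac)$ until $\deg(ac)\le\max\{\deg\Delta,2\deg b\}$.

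You instead reduce under full $Sl_2(R)$-equivalence. You shear to get $\deg k<\deg a$ via $k\mapsto k+at$, and swap when $\deg c<\deg a$; the swap is always improper, since its invariant is $b\notin R_b$. This bounds $\deg a\le\max\{\deg b-1,\deg\Delta/2\}$ and $\deg k<\deg a$. You then handle properness globally. By Lemma \ref{1.3}(2), if $\gamma,\delta\notin S_f$ then $\delta\gamma^{-1}\in S_{\gamma f}$, so each full class splits into at most two proper classes.

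\textbf{What each route buys.} The paper's version produces a small representative inside each given proper class and an explicit search bound on $\deg(ac)$, which is what Example \ref{eg1} uses. Yours needs only the division algorithm, not square roots in the constant field. It is closer to classical Lagrange reduction and gives $\#C(b,\Delta)\le 2\cdot\#\{\text{reduced forms}\}$.

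\textbf{Things to tidy.}
\begin{itemize}
\item Drop your first option. The $v$ supplied by Lemma \ref{1.9} has uncontrolled degree, it sends $k$ to $k+av$ and so destroys $\deg k<\deg a$, and it requires $b\nmid a$. Your index-two argument makes it unnecessary anyway.
\item Your first bullet should say that the shear reduces $k$ modulo $a$. $\deg c$ itself cannot in general be made small, since after the shear $c=(\Delta+\varphi(k))/a$.
\item With $\deg k<\deg a\le\deg c$, the correct comparison is $2\deg a\le\max\{\deg\Delta,\,2\deg a-2,\,\deg a+\deg b-1\}$. The middle term is impossible, which yields exactly the bound above. Since $a\neq 0$ (because $\Delta\not\equiv 0 \bmod \varphi(R)$), only finitely many pairs $(a,k)$ remain, and each determines $c$.
\end{itemize}
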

\begin{proof}
	From theorem $\ref{1.16}$ we only need to show that direct composition satisfies the associative law and $C(b,\Delta)$ is finite. The associativity follows from theorem $\ref{1.19}$ since $C_2(b,\Delta)$ satisfies the associative law.
	To show the finiteness we claim that if $[(a,b,c)]\in C(b,\Delta)$ satisfies \[\deg(ac)> \max\{ \deg\Delta, 2\deg b\},\] then there exists $(a',b,c') \approx (a,b,c)$ such that $\deg(a'c') < \deg(ac)$.
	
	Suppose $[(a,b,c)]\in C(b,\Delta)$, $ac=\Delta + k^2 +bk$. If $\deg (ac) > \max\{ \deg\Delta, 2\deg b\}$, then $\deg(k^2+bk) > \max\{ \deg\Delta, 2\deg b\}$. Then $\deg(k) > \deg(b)$,  $\deg(k^2) > \deg(bk) > 2\deg(b)$. So \[\deg(a)+\deg(c)= \deg(ac)=\deg(\Delta + k^2 +bk)=\deg(k^2)=2\deg(k).\] Without loss of generality, suppose $\deg (c) = \deg(a) + 2m, m \in \mathbf{N}$, then $\deg(c)-m > \deg(b)$ since $\deg(a)+\deg(c)>2\deg(b)$. There exists $u \in \mathbb{F}_{2^n}$ such that $\deg(c+a(uT^m)^2) < \deg (c)$ since every element in a finite field of characteristic 2 has square root. Since \[\deg(b\cdot uT^m)=\deg(b)+m < \deg(c),\] we have
	\[\deg(c+b\cdot uT^m+a(uT^m)^2))< \max\{\deg(c+a(uT^m)^2),\deg(b\cdot uT^m)\}<\deg(c).\]
	Let $\gamma_1=\begin{pmatrix}
		1&0\\uT^m& 1
	\end{pmatrix}$,$\gamma_2=\begin{pmatrix}
	uT^m&1\\1&0
	\end{pmatrix}$. Then there is exactly one of $\gamma_1,\gamma_2 \in S_{(a,b,c)}$, then $(a,b,c+b\cdot uT^m+a(uT^m)^2) \approx (a,b,c)$ or $(c+b\cdot uT^m+a(uT^m)^2,b,a) \approx (a,b,c)$. Since 
	\begin{equation}\notag\begin{aligned}
	\deg(a(c+b\cdot uT^m+a(uT^m)^2))&=\deg(a)+\deg(c+b\cdot uT^m+a(uT^m)^2)\\
	&<\deg(a)+\deg(c)=\deg(ac),
		\end{aligned}
	\end{equation}
	the assertion holds. Thus, through a finite number of steps, for all $[(a,b,c)] \in C(b,\Delta)$ we can find $(A,b,C) \approx (a,b,c)$ such that $\deg(AC) \le \max\{ \deg\Delta, 2\deg b\}$. The polynomials in $F_{2^n}[T]$ of degree at most $\max\{ \deg\Delta, 2\deg b\}$ is finite, so $C(b,\Delta)$ is finite. 
\end{proof}

\begin{example}\label{eg1}
	Let $n=2$, that is, $R=\mathbb{F}_2[T]$. From the proof of theorem $\ref{1.22}$, to find all the elements in $C_(b,\Delta)$ we only need to find all the polynomials in $\Delta+\varphi(R)$ of degree less than or equal to $\max \{\deg(\Delta),2\deg(b)\}$.
\begin{enumerate}
	\item Let $b_1=\Delta_1=1$, then $C_(b_1,\Delta_1)=[(1,1,1)]$ is a trivial group. On the other hand, let $\eta_1$ be a root of $x^2+x+1=0$ then $K(\eta_1)=\mathbb{F}_4(T)$, and all the fractional ideal of $R \oplus R\eta_1 = \mathbb{F}_4[T]$ are equivalent to $\mathbb{F}_4[T]$, so $C_2(b_1,\Delta_1)$ is a trivial group.
	\item Let $b_2=T^2,\Delta_2=T^4$, and $R_b$ be the set of all the polynomials in $R$ without quadratic term, then \[\{\Delta' \in \Delta+\varphi(R) | \deg(\Delta') \le 4\}=
	\{T^4,T^4+T^2+1,T^4+T^3+T^2,T^4+T^3+1\}.\]Consider the factorization of the polynomials above and remove forms equivalent to the principal form $(1,T^2,T^4)$ we find
	\begin{equation}\notag\begin{aligned}
			C(b_2,\Delta_2)=\{&[(1,T^2,T^4)],[(T^2+T+1,T^2,T^2+T+1)],[(T^2,T^2,T^2+T+1)],\\&[(T^2+T+1,T^2,T^2)]\}.
		\end{aligned}\end{equation}
	We have $[(1,T^2,T^4)] \ne [(T^2+T+1,T^2,T^2+T+1)]$ since for all $p,q \in R$ we have $p^2+T^2pq+T^4q^2 \ne T^2+T+1$. From 
	\[\begin{pmatrix}
		1&1\\1&0
	\end{pmatrix}(T^2,T^2,T^2+T+1)=(T^2+T+1,T^2,T^2), T^2+T^2=0 \in R_b,\]
	\[ \begin{pmatrix}
	0&1\\1&1
	\end{pmatrix}(T^2+T+1,T^2,T^2+T+1)=(T^2+T+1,T^2,T^2), T^2+T^2+T+1=T+1 \in R_b,\]
	we have \[(T^2+T+1,T^2,T^2+T+1)\approx(T^2,T^2,T^2+T+1)\approx (T^2+T+1,T^2,T^2).\]
	Then $C(b_2,\Delta_2)=\{[(1,T^2,T^4)],[(T^2+T+1,T^2,T^2+T+1)]\} \cong \mathbf{Z}/2\mathbf{Z}$. Let $\eta_2$ be a root of $x^2+T^2x+T^4=0$, then from theorem $\ref{1.19}$ we have \[\text{Pic}(R\oplus R\eta_2)=C_2(b_2,\Delta_2)=\{ [R\oplus R \eta_2],[R\oplus R \frac{\eta_2+1}{T^2+T+1}]\} \cong \mathbf{Z}/2\mathbf{Z}.\] Notice that $K(\eta_2)=K(\frac{\eta_2}{T^2})=K(\eta_1)$ but $C_2(b_2,\Delta_2)$ and $C_2(b_1,\Delta_1)$ are not isomorphic since we consider the invertible fractional ideal of different ring.
	\item Let $b_3=1,\Delta_3=T^3$. Let $R_b$ be the set of all the polynomials in $R$ without constant term. Similar to the method in (2), we have\begin{equation}\notag
		C(b_3,\Delta_3)=\{[(1,1,T^3)],[(T,1,T^2)],[T^2,1,T],[(T,1,T^2+T+1)],[(T^2+T+1,1,T)]\}.
	\end{equation}
	From\[\begin{pmatrix}
		1&0\\1&1
	\end{pmatrix}(T,1,T^2)=(T,1,T^2+T+1),\; \begin{pmatrix}
	1&0\\1&1
	\end{pmatrix}(T^2,1,T)=(T^2+T+1,1,T)\]
	we have \[(T,1,T^2)\approx(T,1,T^2+T+1),\; (T^2,1,T)\approx(T^2+T+1,1,T).\]
	The cube $\mathcal{A}$ in example $\ref{eg0}$ is a direct cube, so
	we have \[ [(T,1,T^2)]^2=[(T^2,T,1)]=[(T,1,T^2)]^{-1}.\] So\[C(b_3,\Delta_3)=\{[(1,1,T^3)],[(T,1,T^2)],[(T^2,1,T)]\}\cong \mathbf{Z}/3\mathbf{Z}.\]
	
\end{enumerate}
\end{example}

\begin{myremark}
	From the method in lemma $\ref{1.15}$, we can show that if $[(a_1,b,c_1)],[(a_2,b,c_2)] \in C(b,\Delta)$ and $(a_3,b,c_3)$ is a composition of $(a_1,b,c_1)$ and $(a_2,b,c_2)$, then $(a_3,b,c_3)$ belongs to one of $[(a_1,b,c_1)][(a_2,b,c_2)]$, $[(c_1,b,a_1)][(a_2,b,c_2)]$, $[(a_1,b,c_1)][(c_2,b,a_2)]$, $[(c_1,b,a_1)][(c_2,b,a_2)]$. There is a same result in $\mathbf{Z}$. Gauss proved that if $f_1,f_2$ are forms in $\mathbf{Z}$ of discriminant $D$, then the composition of $f_1$ and $f_2$ defined by $\ref{0.1}$ belongs to four proper equivalent class: $[f_1][f_2]$, $[f_1]^{-1}[f_2]$, $[f_1][f_2]^{-1}$, $[f_1]^{-1}[f_2]^{-1}$, where $[f]$ denote the proper equivalent class of $f$ and $[f_1][f_2]$ is the Gauss composition. So if all the elements in $C(b,\Delta)$ has order 1 or 2, direct composition is exactly the same as composition. But as shown in example $\ref{eg0}$ and $\ref{eg1}$(3), we have to consider direct composition. 
	
\end{myremark}

\section{Gauss Genus Theory in R}
\qquad Fix $b \in R$, $b \ne 0$, $\Delta \in R$, $\Delta \not \equiv 0 \mod{\varphi(R)}$.
 
\subsection{Generalization of Gauss's Map in $R$}
\qquad Call $m \in R$ represented by $(a,b,c)$ if there exists $x,y \in R$ such that $ax^2+bxy+cy^2=m$. \linebreak Call $[m] \in (R/(b^2))^*$ represented by form $(a,b,c)$ if there exists an element in $[m]$ represented by $(a,b,c)$. Next lemma is similar to the result over $\mathbf{Z}$( see lemma 8.2.5 and lemma 8.3.1 of Fang$^{\cite[p.182,194]{Fang}}$).

\begin{lemma}\label{2.1}
	\begin{enumerate}[label=(\arabic*)]
		\item If $(a_1,b,c_1) \approx (a_2,b,c_2)$, then $m \in R$ or $[m] \in (R/(b^2))^*$ is represented by $(a_1,b,c_1)$ if and only if $m$ or $[m]$ is represented by $(a_2,b,c_2)$. Call $m$ or $[m]$ represented by class $[(a,b,c)]$ if $m$ or $[m]$ is represented by $(a,b,c)$. 
		\item If $m_1$ and $m_2$ are represented by $[(a_1,b,c_1)]$ and $[(a_2,b,c_2)]$ respectively, then $m_1m_2$ is represented by $[(a_1,b,c_1)][(a_2,b,c_2)]$.
		\item The values in $(R / (b^2))^*$ represented by the principal class $C_0=[(1,b,\Delta)]$ form a subgroup $H$ of $(R / (b^2))^*$. The values in $(R / (b^2))^*$ represented by $C\in C(b,\Delta)] $ form a coset of $H$, denoted by $H_{C}$.
	\end{enumerate}
\end{lemma}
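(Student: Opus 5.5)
Part (1) is a direct change of variables. If $\gamma\in Sl_2(R)$ with $\gamma(a_1,b,c_1)=(a_2,b,c_2)$, then $f_2(x,y)=f_1((x,y)\gamma)$. Hence every value $f_2(x,y)$ is also a value of $f_1$. Conversely, $\gamma^{-1}(a_2,b,c_2)=(a_1,b,c_1)$, so the value sets in $R$ coincide. Taking classes modulo $b^2$ gives the same statement for $[m]\in(R/(b^2))^*$. The $R_b$-condition in Definition \ref{1.2} plays no role here. Independence of the representative is therefore automatic, and the notion ``represented by a class'' is well defined.

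For part (2) I will use the cube description. By Theorem \ref{1.16} together with Lemma \ref{1.6} and Definition \ref{1.8}, for any representatives $f_1=(a_1,b,c_1)$ and $f_2=(a_2,b,c_2)$ there is a direct cube $\mathcal{A}$ with $f_1^{\mathcal{A}}=f_1$, $f_2^{\mathcal{A}}=f_2$, and $f_3^{\mathcal{A}}$ in the class $[f_1][f_2]$. Equation $(\ref{3})$ then reads
\[
f_1(x,y)f_2(z,w)=f_3^{\mathcal{A}}(L_1,L_2).
\]
Suppose $m_1=f_1(x_0,y_0)$ and $m_2=f_2(z_0,w_0)$. Then $m_1m_2=f_3^{\mathcal{A}}(L_1(x_0,y_0;z_0,w_0),L_2(x_0,y_0;z_0,w_0))$, which is a value of a form in $[f_1][f_2]$. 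By (1) it is represented by that class. The same argument works for residue classes: if $[m_i]$ is represented by $f_i$ with unit values, the product of those values is a unit mod $b^2$ represented by the composite.

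Part (3) is where the real work lies. First, $C_0$ represents $[1]$, via $(x,y)=(1,0)$. Next, (2) applied with $C_0C_0=C_0$ from Theorem \ref{1.16} shows that the set $H$ is closed under multiplication. Since $(R/(b^2))^*$ is a finite group, a nonempty multiplicatively closed subset is a subgroup, so $H$ is a subgroup.

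For a general class $C$, I first need that $H_C$ is nonempty. Lemma \ref{1.10} with $m=b$ gives a representative $(A,b,C')\in C$ with $(A,b)=1$, and then $[A]$ is a unit represented by $C$. Fix such an $[m_0]\in H_C$. By (2) and $CC_0=C$, we get $m_0H\subseteq H_C$.

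The reverse inclusion is the main obstacle. The plan is to use the inverse class $C^{-1}=[(c,b,a)]$. Pick $[n_0]\in H_{C^{-1}}$, again nonempty by Lemma \ref{1.10}. For any $[m]\in H_C$, part (2) gives $[m n_0]\in H_{CC^{-1}}=H$, and likewise $[m_0n_0]\in H$. Hence $[m]=[m_0]\,[m_0n_0]^{-1}[mn_0]\in m_0H$, using that $H$ is a group. So $H_C=m_0H$ is a coset of $H$.
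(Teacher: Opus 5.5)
Your proof is correct and follows the paper's argument. Part (1) is the change of variables by $\gamma,\gamma^{-1}$, part (2) comes from the composition identity $(\ref{3})$, and part (3) uses $C_0^2=C_0$, finiteness of $(R/(b^2))^*$, Lemma \ref{1.10} for $H_C\neq\emptyset$, and the relations $CC_0=C$, $C^{-1}C=C_0$. The only cosmetic difference is the last step: you show $H_C\subseteq[m_0]H$ directly via $[m]=[m_0][m_0n_0]^{-1}[mn_0]$, whereas the paper compares cardinalities using $[m_0]H\subseteq H_C$ and $[n_0]H_C\subseteq H$.
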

\begin{proof}
	$(1)$ follows from the definition of proper equivalence. $(2)$ follows from equation $\ref{3}$. Then, since $C_0^2=C_0$, we have $H$ is closed under multiplication. $(R/(b^2))^*$ is a finite group, so $H$ is a finite subgroup of $(R/(b^2))^*$. $\forall C \in C(b,\Delta)$, from the proof of lemma $\ref{1.10}$ we have $ H_C \ne \emptyset$. Suppose $[m]\in H_C$ and $[m'] \in H_{C^{-1}}$. Then by $CC_0=C$ and $C^{-1}C=C_0$ we have $[m]H \subset H_C$ and $[m']H_C \subset H$, so $\#H_C = \#H$ and $H_C = [m]H$. 
\end{proof}

\begin{definition}\label{2.2}
	By lemma $\ref{2.1}$, we have group homomorphism
	\begin{equation}\notag
		\begin{aligned}
			\omega: C(b,\Delta) &\to (R/(b^2))^*/H\\
			C&\mapsto H_C
		\end{aligned}
	\end{equation}
\end{definition}

\begin{myremark}
	In fact $b^2$ is exactly $b^2-4ac$ in characteristic 2. That is, although we have two invariant $b$ and $ac \mod{\varphi(R)}$, which are differ from discriminant over $\mathbf{Z}$, but the generalized Gauss's map is the same as original definition.  
\end{myremark}

\subsection{The Kernel of $\omega$}
\begin{lemma}\label{2.3}
	The group homomorphism \[\omega: C(b,\Delta) \to (R/(b^2))^*/H\]induces a group homomorphism\[\bar{\omega}: C(b,\Delta)/C(b,\Delta)^2 \to (R/(b^2))^*/H.\]
\end{lemma}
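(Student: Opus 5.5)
To show that $\omega$ factors through $C(b,\Delta)/C(b,\Delta)^2$, it suffices to prove $C(b,\Delta)^2 \subset \ker(\omega)$. Since $\omega$ is a group homomorphism (Definition \ref{2.2}, using Theorem \ref{1.22}), we have $\omega(C^2)=\omega(C)^2$. So the whole claim reduces to one fact: every element of the quotient $(R/(b^2))^*/H$ has order dividing $2$. Equivalently, for every $[m]\in(R/(b^2))^*$ we must show $[m]^2\in H$.

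To get this, I would use the principal form $f_0=(1,b,\Delta)$ directly. Take $x=m$ and $y=0$; then $f_0(m,0)=m^2$, so $m^2$ is represented by $C_0$. If $[m]$ is a unit modulo $b^2$, then so is $[m^2]$. By part (3) of Lemma \ref{2.1}, $H$ is by definition the set of values in $(R/(b^2))^*$ represented by $C_0$, so $[m]^2=[m^2]\in H$. Hence $(R/(b^2))^*/H$ is an elementary abelian $2$-group.

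It then follows that for any $C\in C(b,\Delta)$ we get $\omega(C^2)=H_C^2=[m]^2H=H$, where $[m]\in H_C$ is any representative and we use $H_C=[m]H$ from Lemma \ref{2.1}(3). Therefore $C(b,\Delta)^2\subset\ker\omega$. The set $C(b,\Delta)^2$ is a subgroup because $C(b,\Delta)$ is abelian (Theorem \ref{1.22}), so the homomorphism theorem yields the induced map $\bar\omega$, defined by $\bar\omega(C\,C(b,\Delta)^2)=H_C$. This map is well defined and is a homomorphism.

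There is essentially no obstacle here. The only point to check is that $\omega$ really is a homomorphism, i.e.\ $H_{C_1C_2}=H_{C_1}H_{C_2}$. This follows from Lemma \ref{2.1}(2) together with the coset description in Lemma \ref{2.1}(3): the product $H_{C_1}H_{C_2}$ is a coset of $H$ that is contained in the coset $H_{C_1C_2}$, so the two cosets are equal.
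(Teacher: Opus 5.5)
Your proof is correct and follows the paper's argument. Both rest on the observation $m^2=f_0(m,0)$, which gives $[m^2]\in H$ and hence $H_{C^2}=[m]^2H=H$, so $C(b,\Delta)^2\subset\ker(\omega)$; phrasing this as ``$(R/(b^2))^*/H$ has exponent $2$'' is only a repackaging of the paper's step of taking $[m]\in H_C$ and noting that $[m^2]$ lies in both $H_{C^2}$ and $H$.
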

\begin{proof}
	For all $C \in C(b,\Delta)$, choose $[m] \in H_C$. Then $[m^2]\in H_{C^2}$. On the other hand, $[m^2] \in H$ since $m^2=f_0(m,0)$. So $H_{C^2}=H$, that is, $C^2 \in \ker(\omega)$. Then $C(b,\Delta)^2 \subset \ker(\omega)$, and $\omega$ induces a group homomorphism \[\bar{\omega}: C(b,\Delta)/C(b,\Delta)^2 \to (R/(b^2))^*/H.\qedhere\]
\end{proof}

We have proved that $C(b,\Delta)^2 \subset \ker(\omega)$, and in this subsection we will show that \linebreak $C(b,\Delta)^2 = \ker(\omega)$, that is, $\bar{\omega}$ is injective. To prove this we need ternary quadratic form. A ternary quadratic form over $R$ is a fuction of the form\[f(x,y,z)=ax^2+by^2+cz^2+uyz+vxz+wxy,\quad a,b,c,u,v,w \in R.\]
Call two ternary quadratic form $f$ and $g$ are equivalent, if there exists \[\gamma=\begin{pmatrix}
	p_1 & p_2 & p_3\\q_1 &q_2 &q_3 \\r_1 &r_2 &r_3
\end{pmatrix} \in Sl_3(R),\]such that \begin{equation}\label{14}\begin{aligned}
	g(x,y,z)=&f((x,y,z)\gamma)\\
	=&f(p_1x+q_1y+r_1z,p_2x+q_2y+r_2z,p_3x+q_3y+r_3z)\\
	=&f(p_1,p_2,p_3)x^2+f(q_1,q_2,q_3)y^2+f(r_1,r_2,r_3)z^2\\
	&+\begin{vmatrix}
		u & v & w\\q_1 &q_2 &q_3 \\r_1 &r_2 &r_3
	\end{vmatrix}yz+\begin{vmatrix}
	p_1 & p_2 & p_3\\u &v &w \\r_1 &r_2 &r_3
	\end{vmatrix}xz+\begin{vmatrix}
	p_1 & p_2 & p_3\\q_1 &q_2 &q_3 \\u &v &w
	\end{vmatrix}xy.
\end{aligned}
\end{equation}
Denoted by $g=\gamma f$ and $g \approx f$, by definition we have $(\gamma_1\gamma_2)f=\gamma_1(\gamma_2f)$ and $g=\gamma f$ implies $\gamma^{-1}g=f$.

The proof of next result is adapted from theorem 8.3.5 of Fang$^{\cite[pp.209-211]{Fang}}$.
\begin{theorem}
	The kernel of $\omega:C(b,\Delta) \to (R/(b^2))^*/H$ is $C(b,\Delta)^2$.
\end{theorem}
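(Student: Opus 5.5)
Lemma \ref{2.3} already gives $C(b,\Delta)^2\subset\ker(\omega)$, so what remains is the reverse inclusion: if $C\in C(b,\Delta)$ satisfies $H_C=H$, then $C=D^2$ for some $D\in C(b,\Delta)$. I will follow Gauss's argument through ternary forms, as in theorem 8.3.5 of Fang, with the ternary determinant replaced by the characteristic-2 analogue read off from (\ref{14}). The plan has three steps.

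\emph{Step 1: normalize the representative and extract the local data.} Pick $(a,b,c)\in C$. By Lemma \ref{1.10} I may take $(a,b)=1$, and since $H_C=H$ I can also arrange that $a$ is represented by the principal form $f_0=(1,b,\Delta)$ modulo $b^2$. Concretely, this gives $x_0,y_0\in R$ with $a\equiv x_0^2+bx_0y_0+\Delta y_0^2\pmod{b^2}$. Because $(a,b)=1$, $x_0$ is coprime to $b$. The classical analogue of this condition is that $a$ is a square times a norm locally at the primes dividing the discriminant.

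\emph{Step 2: build a ternary form.} The aim is a ternary form $F$ of the shape $(\ref{14})$ with two properties. First, it should contain $(a,b,c)$ as the restriction to a sublattice, e.g.\ $F(x,y,z)=-z^2+ax^2+bxy+cy^2$ with suitable cross terms. Second, its invariant (the characteristic-2 half-discriminant) should make it $Sl_3(R)$-equivalent to a form whose restriction to the plane $\{x=0\}$ exhibits $C$ as a square class. The classical step is to show that $F$ represents $1$ primitively, and then that $F\approx z^2-(\text{form})$. Here the existence of a primitive representation will come from the congruence in Step 1: modulo $b^2$ the solution $(x_0,y_0)$ gives a local representation. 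Away from $b$, the form is nonsingular in characteristic 2, since the Arf/half-discriminant is a unit. To globalize I would use the Chinese remainder theorem together with the fact that $R$ is a PID, so that an isotropic-type vector can be completed to a matrix in $Sl_3(R)$, as in Lemma \ref{1.13}.

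\emph{Step 3: read off a square root.} After moving the primitive vector representing $1$ to the first basis vector by some $\gamma\in Sl_3(R)$, complete the square in characteristic 2, which here means reducing the cross terms with $x$ modulo the coefficient $1$. The complementary binary form $g=(a',b,c')$ then has invariants $(b,\Delta)$. The direct-composition cube of Lemma \ref{1.12} for $g$ with itself should be exactly the $2\times2\times2$ cube encoded by $\gamma$, which gives $[g]^2=C$. I would check this with the ideal correspondence of Theorem \ref{1.19}: show that $\Pi^{-1}(C)=M^2$ for $M=\Pi^{-1}([g])$.

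\emph{Main obstacle: the sign choices.} The hardest part will be controlling directness, i.e.\ the membership conditions in $R_b$ from Definition \ref{1.8}. Classically the composition is determined only up to inversion; here that ambiguity corresponds to choosing $k$ versus $k+b$. If the construction yields $[g]^2=C^{-1}$ instead of $C$, I would fix it by replacing $g$ with $(c',b,a')$, which represents the inverse class by Theorem \ref{1.16}, or by adjusting $\gamma$ with the $v$-trick from Lemma \ref{1.10}. Since $C^{-1}$ is a square exactly when $C$ is, either outcome suffices.
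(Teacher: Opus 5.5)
Your outline has a genuine gap in Steps 2 and 3, which is where all the work lies. In Step 2 you never specify the ternary form. The globalization you propose also does not work: the Chinese remainder theorem cannot glue local primitive representations (modulo $b^2$, and at primes not dividing $b$) into a global primitive representation of $1$. That would be a Hasse principle for integral ternary forms, which needs a one-class-in-the-genus argument you do not supply.

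The paper avoids this entirely. From $[1]\in H_C$ (more convenient than your $[a]\in H$) it gets $ap^2+bpq+cq^2=1+wb^2$, and it takes $f=ax^2+cy^2+wz^2+pyz+qxz+bxy$, whose restriction to $z=0$ is $(a,b,c)$. The vector $(p,q,b)$ of cross coefficients lies in the radical of the polar form and has $f(p,q,b)=1$, so the representation of $1$ comes for free. Completing it to $\gamma\in Sl_3(R)$ gives $x^2+b_1y^2+c_1z^2+yz$. A degree reduction (substitutions $x\mapsto x+uy$, $x\mapsto x+uz$, $y\mapsto y+uz$, using that every element of $\mathbb{F}_{2^n}$ is a square) then shows $f\approx x^2+yz$. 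This reduction is the first missing idea.

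Step 3 is a characteristic-$\neq 2$ argument. Since $(x+\alpha y)^2=x^2+\alpha^2y^2$, you cannot clear cross terms by completing the square. A vector splits off as an $x^2$-summand only if it lies in the radical of the (alternating) polar form. For a form of half-discriminant $1$, the complement then has $yz$-coefficient $1$, so it is not a form $(a',b,c')$ with invariants $(b,\Delta)$. For your $z^2+ax^2+bxy+cy^2$ (signs are irrelevant in characteristic $2$), the radical is $Re_z$, and the complement is just an $Sl_2(R)$-transform of $(a,b,c)$ plus the square of a linear form. Either way no square root of $C$ appears, and your claim that the composition cube ``should be'' encoded by $\gamma$ has nothing behind it.

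The paper extracts the square root differently. Pulling back $x^2+yz$ to $z=0$ writes $ax^2+bxy+cy^2=L_1^2+L_2L_3$. At $(x,y)=(t^2q_2+q_3,\,t^2p_2+p_3)$ one has $L_2=b$ and $L_3=t^2b$, so $(a,b,c)$ represents $(At^2+bt+C)^2$. Then $t$ is chosen by CRT (legitimately here, using $r_1b+r_2C+r_3A=1$) to make this coprime to $b$. Hence $(a,b,c)$ primitively represents $a'^2$ with $(a',b)=1$, and the argument of Lemma \ref{1.10} gives $(a,b,c)\approx(a'^2,b,c')$. The cube $(p_1,q_1,p_2,q_2,r_1,s_1,r_2,s_2)=(1,0,0,a',0,c',a',b)$ has both directness sums equal to $0$, which yields $C=[(a',b,a'c')]^2$.

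So the sign issue you single out is trivial. What is missing is the explicit ternary form with its reduction to $x^2+yz$, and the mechanism ``primitively represents a square coprime to $b$ $\Rightarrow$ square class.'' As a minor point, your claim in Step 1 that $x_0$ is coprime to $b$ does not follow: modulo a prime $\pi\mid b$ you only get $a\equiv x_0^2+\Delta y_0^2$, which allows $\pi\mid x_0$.
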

\begin{proof}
	Suppose $\omega([(a,b,c)])=H$, then there exists $p,q,w \in R$ such that \[ap^2+bpq+cq^2=wb^2+1.\]Consider the ternary form\[f(x,y,z)=ax^2+cy^2+wz^2+pyz+qxz+bxy.\]
	Let $e_1=\frac{ap}{(ap,cq)}$,$e_2=\frac{cq}{(ap,cq)}$, then there exists $h_1,h_2 \in R$ such that $e_1h_1+e_2h_2=pq+wb$. Let\[\gamma=\begin{pmatrix}
	p&q&b\\h_1&h_2&(ap,cq)\\e_2&e_1& 0
	\end{pmatrix}.\]Then\[\det(\gamma)=pe_1(ap,cq)+qe_2(ap,cq)+b(e_1h_1+e_2h_2)=ap^2+cq^2+bpq+wb^2=1,\]so $\gamma \in Sl_3(R)$. From equation $\ref{14}$ we have\begin{equation}\notag\begin{aligned}
	\gamma f=&f(p,q,b)x^2+f(h_1,h_2,(ap,cq))y^2+f(e_2,e_1,0)z^2\\
	&+\begin{vmatrix}
		p&q&b\\h_1&h_2&(ap,cq)\\e_2&e_1& 0
	\end{vmatrix}yz+\begin{vmatrix}
	p&q&b\\p&q&b\\e_2&e_1& 0
	\end{vmatrix}xz+\begin{vmatrix}
	p&q&b\\h_1&h_2&(ap,cq)\\p&q&b
	\end{vmatrix}xy\\
	=&(ap^2+bq^2+wb^2+pqb+qpb+bpq)x^2+f(h_1,h_2,(ap,cq))y^2+f(e_2,e_1,0)z^2+yz\\
	=&x^2+b_1y^2+c_1z^2+yz,\text{ where }b_1=f(h_1,h_2,(ap,cq)),c_1=f(e_2,e_1,0).
	\end{aligned}
	\end{equation}
	Let \[f_1(x,y,z)=x^2+b_1y^2+c_1z^2+yz.\] We claim that if $\deg(b_k)+ \deg(c_k) \neq 0$, we can find \[f_{k+1}(x,y,z)=x^2+b_{k+1}y^2+c_{k+1}z^2+yz\] such that $f_{k+1} \approx f_k$, and $\deg(b_{k+1})+\deg(c_{k+1}) < \deg(b_k)+ \deg(c_k)$. 
	\begin{enumerate}[label=(\arabic*)]
		\item If $2 \mid \deg(b_k)$，then since every element in $\mathbb{F}_{2^n}$ has a square root we can find $u \in R$ such that $\deg(u^2+b_k)<\deg(b_k)$. Let \[f_{k+1}(x,y,z)=f_k(x+uy,y,z)=x^2+(u^2+b_k)y^2+c_kz^2+yz.\]
		\item Similarly, if $2 \mid \deg(c_k)$, choose $u \in R$ such that $\deg(u^2+c_k) < \deg(c_k)$ and let \[f_{k+1}(x,y,z)=f_k(x+uz,y,z)=x^2+b_ky^2+(u^2+c_k)z^2+yz.\]
		\item If $2\nmid \deg(b_k)$ , $ 2 \nmid \deg(c_k)$, without loss of generality, suppose $\deg(b_k) \le \deg(c_k)$. There exists $u \in R,\deg(u)=\frac{1}{2}(\deg(c_k)-\deg(b_k))$ such that $\deg(b_ku^2+c_k) < \deg(c_k)$ since $2 \mid (\deg(c_k)-\deg(b_k))$. Let\[f_{k+1}(z,y,z)=f_k(x,y+uz,z)=x^2+b_ky^2+(b_ku^2+c_k+u)z^2+yz.\]Then \[\deg(b_ku^2+c_k+u) \le \max\{\deg(b_ku^2+c_k),\deg(u)\} < \deg(c_k).\]
	\end{enumerate}
	Then the claim follows. Thus, by finite steps, we have \[f_1 \approx f_2 \approx \cdots \approx f_m,\: f_m(x,y,z)=x^2+b_my^2+c_mz^2+yz,\: b_m,c_m \in \mathbb{F}_{2^n}.\]
	Let\[g(x,y,z)=f_m(x+\sqrt{b_m}y+\sqrt{b_m}c_mz,\: y+c_mz, \: z)=x^2+yz.\]Then we have \[f\approx f_1 \approx \cdots \approx f_m \approx g.\] 
	Thus, there exists $\gamma = \begin{pmatrix}
		p_1 & p_2 & p_3\\q_1 &q_2 &q_3 \\r_1 &r_2 &r_3
	\end{pmatrix} \in Sl_3(R),$ such that \[f(x,y,z)=g((x,y,z)\gamma).\]
	Let $z=0$, then \[f(x,y,0)=ax^2+bxy+cy^2=(p_1x+q_1y)^2+(p_2x+q_2y)(p_3x+q_3y).\]
	Compare the coefficient of $xy$ we find $p_2q_3+q_2p_3=b$.
	Let \[L_1=p_1x+q_1y,\; L_2=p_2x+q_2y,\; L_3=p_3x+q_3y.\]
	For all $t \in R$, we have\begin{equation}\notag
	f(x,y,0)=L_1^2+L_2L_3=[L_1+(t^2+t)L_2+L_3]^2+[(t+1)^2L_2+L_3](t^2L_2+L_3)
	\end{equation}
	Let $x=t^2q_2+q_3$, $y=t^2p_2+p_3$, then\[L_1=t^2(p_1q_2+p_2q_1)+(p_1q_3+p_3q_1),\: L_2=p_2q_3+q_2p_3,\: L_3=t^2(p_2q_3+q_2p_3),\]
	\begin{equation}\notag\begin{aligned}
			f(t^2q_2+q_3,t^2p_2+p_3,0)&=[(p_1q_2+p_2q_1)t^2+(p_2q_3+p_3q_2)t+(p_1q_3+p_3q_1)]^2\\
			&=(At^2+bt+C)^2, \text{ where }A=p_1q_2+p_2q_1, C=p_1q_3+p_3q_1
		\end{aligned}\end{equation}
	We claim that there exists $t \in R$ such that $(At^2+bt+C,b)=1$. From Chinese remainder theorem we only need to show for all prime $p \mid b$ there exists $t \in R$ such that $(At^2+bt+C,b)=1$. Since $\gamma \in Sl_3(R)$, we have\[\det(\gamma)=r_1(p_2q_3+q_2p_3)+r_2(p_1q_3+p_3q_1)+r_3(p_1q_2+p_2q_1)=r_1b+r_2C+r_3A=1.\]
	Then $A,b,C$ are coprime. If $p \mid C$, then $p \nmid A$, let $t=1$. If $p \nmid C$, let $t=0$. The claim follows. Then there exists $x=t^2q_2+q_3$, $y=t^2p_2+p_3$ such that \[f(x,y,0)=ax^2+bxy+cy^2=(At^2+bt+C)^2,\: (At^2+bt+C,b)=1.\] Let $x'=\frac{x}{(x,y)},y'=\frac{y}{(x,y)},a'=\frac{At^2+bt+C}{(x,y)} \in R$, then 
	\[ax'^2+bx'y'+cy'^2=a'^2,\: (a',b)=1.\] From the proof of lemma $\ref{1.10}$, there exists $r',s' \in R$ such that $\begin{pmatrix}
		x'&y'\\r'&s'
	\end{pmatrix}
	\in S_{(a,b,c)}$. Then \[ (a,b,c) \approx \begin{pmatrix}
		x'&y'\\r'&s'
	\end{pmatrix} (a,b,c)=(a'^2,b,c'),\text{ where }c'=ar'^2+br's'+cs'^2.\]
	From direct Bhargava cube \[\xymatrix{&
		1\ar@{-}[rr]\ar@{-}[ld]\ar@{-}[dd]& &
		0\ar@{-}[ld]\ar@{-}[dd]\\
		0\ar@{-}[rr]\ar@{-}[dd]& &
		a'\ar@{-}[dd]&\\&
		0\ar@{-}[rr]\ar@{-}[ld]& &
		c'\ar@{-}[ld]\\
		a'\ar@{-}[rr]& &
		b&}\]
	we have \[ [(a',b,a'c')][(a',b,a'c')]=[(a'^2,b,c')].\]
	That is, \[[(a,b,c)]=[(a'^2,b,c')]=[(a',b,a'c')]^2.\]
	So $\ker(\omega)=C(b,\Delta)^2$.
\end{proof}

\begin{example}\label{eg2}
	Consider $\ref{eg1}$(2). Since
	\[C(T^2,T^4)=\{[(1,T^2,T^4)],[(T^2+T+1,T^2,T^2+T+1)]\} \cong \mathbf{Z} / 2\mathbf{Z},\]
	we have \[\ker(\omega)=C(T^2,T^4)^2=\{[(1,T^2,T^4)]\}.\] 
	And by computation we have
	\[\omega([(1,T^2,T^4)])=\{[1],[T^2+1],[T^3+1],[T^3+T^2+1]\}=H,\]and
	\begin{equation}\notag\begin{aligned}
		&\omega([(T^2+T+1,T^2,T^2+T+1)])\\=&\{[T+1],[T^2+T+1],[T^3+T+1],[T^3+T^2+T+1]\}=[T+1]H \ne H.
		\end{aligned}\end{equation}
	As a result, if two class $C_1,C_2 \in C(T^2,T^4)$ can represented a same element in $R$ or $(R/(T^4))^*$, then they are the same class.
\end{example}

\begin{example}\label{eg3}
	Consider $\ref{eg1}$(3). Since \[C(1,T^3)=\{[(1,1,T^3)],[(T,1,T^2)],[(T^2,1,T)]\} \cong \mathbf{Z}/3\mathbf{Z}\] we have $C(1,T^3)/C(1,T^3)^2$ is a trivial group. On the other hand, since $b^2=1$ we have $(R/(1))^*/H$ is a trivial group. Moreover, if $b \in \mathbb{F}$, then $C(b,\Delta)$ must be a group of odd order since $(R/(b^2))^*$ is a trivial group and $C(b,\Delta)/C(b,\Delta)^2$ must be a trivial group, that is, there is not an element of order $2$ in $C(b,\Delta)$.
\end{example}

\section{Conclusion}
\qquad This paper extend Gauss's direct composition to quadratic forms over $R$, a polynomial ring of finite field of characteristic 2. Under the generalized direct composition, the set of proper equivalence class of primitive quadratic forms over $R$ is a finite Abelian group and is isomorphic to the corresponding Picard group. Moreover, if two forms of same invariant $b,\Delta \in R$ can represent same elements in $(R/(b^2))^*$, then they differ only by a square term. This extends the classical theory of Gauss  to the case of characteristic 2.

This work does not address the question of reduced form, so it is complex to determine $C(b,\Delta)$. Gauss also considered number of genera( which is equal to number of proper \linebreak equivalence class of order 2). The next step of this project is to compute the number of genera and to find the relation between genus theory in characteristic 2 and class field.
%% ================================================

\addcontentsline{toc}{section}{References}
\begin{center}
	
\end{center}

\begin{center}
	\xiaoerhao\textbf{Acknowledgments}
\end{center}
\addcontentsline{toc}{section}{Acknowledgments}
\qquad I wish to thank my supervisor, Professor Xu, for his thoughtful guidance on the key steps of this paper and for his encouragement. I would also like to thank Professor Fang, in whose class I first encountered the theory of binary quadratic form and developed an interest in it. 

\end{document}